\newtheorem{remark}[theorem]{Remark}
\newtheorem{algorithm}[theorem]{Algorithm}
\newcommand{\complex}{\mathbb{C}}
\begin{document}



\bibliographystyle{plain}
\title{On parallel multisplitting methods for non-Hermitian positive definite\\ linear systems}

\author{
Cheng-yi Zhang\thanks{Institute of Information and system Science,
Xi'an Jiaotong University, Xi'an, Shaanxi, 710049, P.R. China;
School of Science, Xi'an Polytechnic University, Xi'an, Shaanxi,
710048, P.R. China (chyzhang08@126.com). Supported by the Science
Foundation of the Education Department of Shaanxi Province of China\
(11JK0492), the Scientific Research Foundation of Xi'an Polytechnic
University\ (BS1014) and China Postdoctoral Science
Foundation(20110491668).}
\and Shuanghua Luo\thanks {Department of Mathematics of School of
Science, Xi'an Polytechnic University, Xi'an, Shaanxi 710048, P.R.
China (iwantflyluo@163.com).}\and Yan Zhu\thanks{College of
Mathematic and Information Science, Qujing Normal University,
Qujing, 65011, P.R. China (zhuyanlj@163.com).}
}

\pagestyle{myheadings} \markboth{Cheng-yi Zhang, Shuanghua Luo and
Yan Zhu}{On paralell multisplitting methods for non-Hermitian
positive definite linear systems} \maketitle

\begin{center}
\end{center}

\begin{abstract}
To solve non-Hermitian linear system $Ax=b$ on parallel and vector
machines, some paralell multisplitting methods are considered. In
this work, in particular: i) We establish the convergence results of
the paralell multisplitting methods, together with its relaxed
version, some of which can be regarded as generalizations of
analogous results for the Hermitian positive definite case; ii) We
extend the positive-definite and skew-Hermitian splitting (PSS)
method methods in [{\em SIAM J.~Sci.~Comput.}, 26:844--863, 2005] to
the parallel PSS methods and propose the
corresponding convergence results.
\end{abstract}
\begin{keywords}
Paralell multisplitting method; Non-Hermitian positive definite
matrices; $P$-regular splitting; Parallel PSS methods;
convergence.
\end{keywords}
\begin{AMS}
65F10, 15A15, 15F10.
\end{AMS}

\section{Introduction} \label{intro-sec}
Many problems in scientific computing give rise to a system of $n$
linear equations in $n$ unknowns,
\begin{equation}\label{r1}
Ax=b,\ \ \ A=(a_{ij})\in \complex^{n\times n}\ \ {\rm nonsingular,\
\ and} \ b,x\in \complex^{n},
\end{equation} where $A$ is a large, sparse non-Hermitian matrix.
In this paper we consider the important case where $A$ is {\em
positive definite}; i.e., the Hermitian part $H=(A+A^*)/2$ is
Hermitian positive definite, where $A^*$ denotes the conjugate
transpose of the matrix $A$. Large, sparse systems of this type
arise in many applications, including discretizations of
convection-diffusion problems \cite{elman}, regularized weighted
least-squares problems \cite{benzi_ng}, real-valued formulations of
certain complex symmetric systems \cite{bb}, and so forth.

In order to solve system (\ref{r1}) iteratively on parallel and
vector machines, O'Leary and White \cite{{o'leary}} introduced the
multisplitting technique for linear system. Later, this technique
was further studied by many authors; see e.g. \cite{{zhongzhibai}},
\cite{{frommer1}}, \cite{{frommer2}}, \cite{{neumann}},
\cite{{white1}}, \cite{{white2}},\cite{{white3}},
\cite{{hadjidimos}}, \cite{{guangxicao}}, \cite{{zhihaocao}},
\cite{{nabben}}, \cite{{elsner}}, \cite{{yongzhongsong1}},
\cite{{yongzhongsong2}}, \cite{{yongzhongsong3}}, \cite{{wenli}},
\cite{{daweichang}}, \cite{{jaeheonyun}}, \cite{{jaeheonyun1}},
\cite{jaeheonyun2} \cite{{wangderen}}, \cite{{wangxinmin}},
\cite{{joanjosepcliment}}, etc.

As defined in \cite{o'leary} and \cite{yongzhongsong2} a
multisplitting of $A$ is a collection of triples of matrices
$(M_k,N_k,E_k)_{k=1}^m$ satisfying\begin{itemize}
\item The matrix $A$ can be split into \begin{equation}\label{multi0}
A=M_k-N_k,\ k=1,2,\cdots,m,
\end{equation} where $M_k$ is nonsingular;
\item $E_k,\ k=1,2,\cdots,m$, are diagonal matrices with nonnegative
entries and satisfy $\sum_{k=1}^mE_k=I$, the identity matrix.
\end{itemize}

\begin{algorithm} \label{algorithm1}
Given any initial vector $x^{(0)}$.
\begin{itemize}
\item For $i=0,1,2,\cdots,$ until convergent.
\item For $k=1$ to $m$
\begin{equation}\label{multi01}
M_ky_k=N_kx^{(i)}+b ~~~~~~~~~~~
\end{equation}
\begin{equation}\label{multi02}
x^{(i+1)}=\sum_{k=1}^mE_ky_k. ~~~~~~~~~~~~~~~~~~~~~~
\end{equation}
\end{itemize}
 \end{algorithm}

It is easy to see that Algorithm \ref{algorithm1} corresponds to the
following iteration
\begin{equation}\label{multi1}
x^{(i+1)}=\sum_{k=1}^mE_kP_kx^{(i)},\ \ \ \ i=0,1,2,\ldots
\end{equation}
where the operators $P_k:\ \complex^n\rightarrow \complex^n,\ 1\leq
k\leq m,$ are defined as
\begin{equation}\label{multi2}
P_kx=M_k^{-1}N_kx+M_k^{-1}b.
\end{equation}

Thus, iteration (\ref{multi1}) can be rewritten as
\begin{equation}\label{multi3}
x^{(i+1)}=Tx^{(i)}+\sum_{k=1}^mE_kM_k^{-1}b,\ \ \ \ i=0,1,2,\ldots
\end{equation}
where $T=\sum_{k=1}^mE_kM_k^{-1}N_k$ is the iteration matrix.

Conditions on the splittings (\ref{multi0}) and on the weighting
matrices which ensure the convergence of Algorithm \ref{algorithm1}
in some important cases where given by O'Leary and White
\cite{{o'leary}}, Nabben \cite{{nabben}}, Neumann and Plemmons
\cite{{neumann}}, Frommer et al \cite{frommer1}, \cite{frommer2},
Song et al \cite{{yongzhongsong1}}, \cite{{yongzhongsong2}},
\cite{{yongzhongsong3}}, Li et el \cite{{wenli}}, Hadjidimos and
Yeyios \cite{hadjidimos}, Cao and Song \cite{guangxicao}, etc. They
showed that Algorithm \ref{algorithm1} (semi)converges when
\begin{itemize}
\item $A$ is Hermitian (or symmetric) positive definite and the
splittings (\ref{multi0}) are $P-$regular;
\item $A$ is monotone and the splittings (\ref{multi0}) are (weak) regular;
\item $A$ is an $H-$matrix and the splittings (\ref{multi0}) are $H-$compatible splittings \cite{jaeheonyun2};
\item $A$ is Hermitian (or symmetric) positive semidefinite and the
splittings (\ref{multi0}) are $P-$regular;
\item $A$ is a singular $M-$matrix and the splittings (\ref{multi0}) are (weak)
regular.
\end{itemize}

Recently, there has been considerable interest in the
positive-definite and skew-Hermitian splitting (PSS) method
introduced by Bai, Golub, Lu and Yin for solving non-Hermitian
positive definite linear systems, see \cite{B.Z3}. In this paper we
further study this method and generalize it to the parallel PSS
method. Let \begin{equation}\label{multi05}
\begin{array}{lll}
A=M_k+N_k&=&(M_k+N_k^*)+(N_k-N_k^*)\\
&=&P_k+S_k,\\
&&\ \ \ \ \ k=1,2,\cdots,m
\end{array}
\end{equation}
where $M_k\neq 0$ and $N_k\neq 0$. If $A$ is non-Hermitian positive
definite, so is $P_k:=M_k+N_k^*$. Furthermore, $S_k:=N_k-N_k^*$ is
skew-Hermitian. Thus, $A=P_k+S_k,\ k=1,2,\cdots,m,$ are PS
splittings.

\begin{algorithm} \label{algorithm2} {\rm(}Parallel PSS method{\rm)}
Given any initial vector $x^{(0)}$.
\begin{itemize}
\item For $i=0,1,2,\cdots,$ until convergent.
\item For $k=1$ to $m$
\begin{equation}\label{multi5}
\left\{
\begin{array}{cc}
(\alpha_k I+P_k)x^{(i+1/2)}=(\alpha_k I-S_k)x^{(i)}+b\\
(\alpha_k I+S_k)y_k=(\alpha_k I-P_k)x^{(i+1/2)}+b~
\end{array}
\right.
\end{equation}
\begin{equation}\label{multi6}
x^{(i+1)}=\sum_{k=1}^mE_ky_k. ~~~~~~~~~~~~~~~~~~~~~~
\end{equation}
\end{itemize}
 \end{algorithm}

In matrix-vector form and for each $k,\ k=1,2,\cdots,m$, the PSS
iteration (\ref{multi5}) can be equivalently rewritten as
\begin{equation}\label{multi7}
y_k=M(\alpha_k)x^{(i)}+G(\alpha_k)b,
\end{equation}
where
\begin{equation}\label{multi8}
\left\{
\begin{array}{ll}
M(\alpha_k)=(\alpha_k I+S_k)^{-1}(\alpha_k I-P_k)(\alpha_k I+P_k)^{-1}(\alpha_k I-S_k)\\
G(\alpha_k)=2\alpha_k (\alpha_k I+S_k)^{-1}(\alpha_k I+P_k)^{-1}
\end{array}
\right.
\end{equation}

Thus, Algorithm \ref{algorithm2} can be rewritten as the following
iteration scheme
\begin{equation}\label{multi9}
x^{(i+1)}=\mathscr{M}(\alpha)x^{(i)}+\sum_{k=1}^mE_kG(\alpha_k)b,\ \
\ \ i=0,1,2,\ldots
\end{equation}
where
\begin{equation}\label{multi10}
\begin{array}{lll}
\mathscr{M}(\alpha)&=&\sum_{k=1}^mE_kM(\alpha_k)\\&=&\sum_{k=1}^mE_k(\alpha_k
I+S_k)^{-1}(\alpha_k I-P_k)(\alpha_k I+P_k)^{-1}(\alpha_k I-S_k)
\end{array}
\end{equation}
 is the iteration matrix.

When the matrix $N_k$ in (\ref{multi05}) is triangular or block
triangular, the splittings (\ref{multi05}) are TS splittings or BTS
splittings, and thus, Algorithm \ref{algorithm2} becomes Parallel
TSS method or Parallel BTSS method.

In Algorithms \ref{algorithm1} and \ref{algorithm2} a relaxation
parameter $\omega\in \mathbb{R},\ \omega\neq0,$ can be introduced by
replacing the computation of $x^{(i+1)}$ in (\ref{multi02}) with the
equation
\begin{equation}\label{multi4}
\begin{array}{lll}
x^{(i+1)}&=&\omega \sum_{k=1}^mE_ky_k+(1-\omega)x^{(i)}\\
&=&T_{\omega}x^{(i)}+\omega\sum_{k=1}^mE_kM_k^{-1}b,\ \ \ \
i=0,1,2,\ldots
\end{array}
\end{equation}
where $T_{\omega}=\omega X+(1-\omega)I$ is the iteration matrix with
either $X=T$ or $X=\mathscr{M}(\alpha)$. Clearly, with $\omega=1$,
equation (\ref{multi02}) is recovered. In the case of $\omega\neq1$,
we have a Relaxed Multisplitting (see \cite{frommer1} and
\cite{o'leary}) or a Relaxed Parallel PSS (TS, BTSS) Algorithm.

There have been several studies on the convergence of multisplitting
iterative methods for non-Hermitian positive definite linear
systems. In \cite{{hadjidimos}} and \cite{{wangchuanlong}} some
convergence conditions of multisplitting methods for non-Hermitian
positive definite matrices have been established.

Continuing in this direction, in this paper we establish new results
on multisplitting methods for solving system (\ref{r1}) iteratively,
focusing on a particular class of splittings. For a given matrix
$A\in \complex^{n\times n}$, a splitting $A=M-N$ with $M$
nonsingular is called a {\em $P$-regular splitting} if the matrix
$M^*+N$ is positive definite, i.e., the Hermitian part of $M^*+N$ is
Hermitian positive definite \cite{ortega}. It is a well known result
\cite{Wei,ortega} that if $A$ is Hermitian positive definite and
$A=M-N$ is a $P$-regular splitting, then the splitting iterative
method is convergent: $\rho(M^{-1}N)<1$. An extension of {\em
$P$-regular splitting} was introduced by Ortega and Plemmons
\cite{ortega11} and \cite{A.B}. A splitting $A=M-N$ with $M$
nonsingular is called an {\em extended $P$-regular splitting} if the
matrix $M^*(A^{-1})^*A+N$ is positive definite. A stronger condition
of the splitting $A=M-N$ proposed by Yuan \cite{jinyunyuan} that
$M^*A+A^*N$ is positive definite guarantees that the splitting
iterative method is convergent. In this paper, we propose some
conditions such that the parallel multisplitting methods converge by
examining the spectral properties of the iteration matrix induced by
these special multisplittings of a non-Hermitian positive definite
matrix.

The paper is organized as follows. Some notations and preliminary
results are given in Section 2. In section 3 we study the
convergence of Algorithm \ref{algorithm1}, together with its relaxed
version. In section 4 we discuss the convergence of Algorithm
\ref{algorithm2}. Some conclusions are given in section 5.

\section{Notation and preliminaries}\label{prelimi-sec}
For convenience, some of the terminology used in this paper will be
given.

The symbol $\complex^{n\times n}$ will denote the set of all
$n\times n$ complex matrices. Let $A,\ B\in \complex^{n\times n}$.
We use the notation $A\succ 0$ ($A\succeq 0$) if $A$ is Hermitian
positive (semi-)definite. If $A$ and $B$ are both Hermitian, we
write $A\succ B$ ($A\succeq B$) if and only if $A-B\succ 0$
($A-B\succeq 0$). If $A$ is Hermitian matrix, then all of
eigenvalues of $A$ are real, and we denote by $\lambda_{\min}(A)$
and $\lambda_{\max}(A)$ the smallest (i.e., leftmost) and largest
(rightmost) eigenvalues, respectively. Let $A\in \complex^{n\times
n}$ with $H=(A+A^*)/2$ and $S=(A-A^*)/2$ its Hermitian and
skew-Hermitian parts, respectively; then $A$ is non-Hermitian
positive (semi-)definite if and only if $H\succ0$ ($H\succeq0$).
Furthermore, $\|A\|_2=\sqrt{\lambda_{\max}(A^*A)}$, denotes the
spectral norm of the matrix $A$.

The following theorems gives convergence conditions for iterative
methods based on a single splitting $A=M-N$.

\begin{theorem}\label{main_thm} {\rm (see \cite{chengyi})}
Let $A\in \complex^{n\times n}$ be non-Hermitian positive definite,
and let $A=M-N$ be a $P$-regular splitting with $N$ Hermitian. Then
$\rho(M^{-1}N)<1.$
\end{theorem}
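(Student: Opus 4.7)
The plan is to analyze an arbitrary eigenvalue $\lambda$ of $T:=M^{-1}N$, with eigenvector $x\ne 0$, and to show directly that $|\lambda|<1$ by combining three ingredients: the $P$-regularity hypothesis, the Hermiticity of $N$, and the positive definiteness of the Hermitian part $H=(A+A^*)/2$.

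From $Nx=\lambda Mx$ together with $A=M-N$, one obtains $Ax=(1-\lambda)Mx$. Since $A$ is non-Hermitian positive definite and hence invertible, the value $\lambda=1$ would force $x=0$, a contradiction; so $\lambda\ne 1$, and I may introduce the scalar $\mu:=\lambda/(1-\lambda)=\alpha+i\beta$ with $\alpha,\beta\in \reals$. Writing $a:=x^*Ax=h+it$ with $h>0$ and $t\in \reals$, the relations $1+\mu=1/(1-\lambda)$ and $\lambda(1+\mu)=\mu$ yield the convenient formulas $x^*Mx=(1+\mu)a$ and $x^*Nx=\mu a$.

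Two real scalar identities now emerge. First, the Hermiticity of $N$ forces $x^*Nx=\mu a$ to be real, which gives $\alpha t+\beta h=0$. Second, the $P$-regularity of the splitting gives ${\rm Re}\bigl(x^*(M^*+N)x\bigr)>0$; substituting the formulas above, this reduces to ${\rm Re}[(1+2\mu)a]=(1+2\alpha)h-2\beta t>0$. Eliminating $t$ between these two identities (in the case $\alpha\ne 0$) shows that the quantity $\alpha+2\alpha^2+2\beta^2$ carries the same sign as $\alpha$. The degenerate case $\alpha=0$ forces $\beta h=0$, hence $\beta=0$ since $h>0$, so that $\mu=0$ and therefore $\lambda=0$.

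It then remains to translate $|\lambda|<1$ into a condition on $\alpha$: the identity $|\lambda|^2=(\alpha^2+\beta^2)/((1+\alpha)^2+\beta^2)$ shows that $|\lambda|<1\iff\alpha>-1/2$. If $\alpha>0$ this is immediate. If $\alpha<0$, the sign condition derived above gives $\alpha(1+2\alpha)<-2\beta^2\le 0$, and since $\alpha<0$ this forces $1+2\alpha>0$, i.e., $\alpha>-1/2$. The only place where I anticipate care being needed is the sign bookkeeping in the elimination step; once that is carried out, $\rho(T)<1$ falls out of an elementary case split, with no deep machinery required.
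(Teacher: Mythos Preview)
Your argument is correct. The eigenvalue computation is clean: from $Nx=\lambda Mx$ and $A=M-N$ you correctly derive $x^*Mx=(1+\mu)a$ and $x^*Nx=\mu a$ with $\mu=\lambda/(1-\lambda)$; the Hermiticity of $N$ gives the real constraint $\alpha t+\beta h=0$, the $P$-regularity gives $(1+2\alpha)h-2\beta t>0$, and eliminating $t$ yields that $\alpha+2\alpha^2+2\beta^2$ and $\alpha$ share sign when $\alpha\ne 0$. The case split then forces $\alpha>-1/2$, which is exactly $|\lambda|<1$. The sign bookkeeping you flagged as the delicate point is fine.

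As for comparison with the paper: there is nothing to compare to. The paper does not prove this theorem; it simply states that ``the proof can be found, e.g., in \cite{chengyi}'' and moves on. Your self-contained eigenvalue argument is in the classical Ostrowski--Reich style used for the Hermitian positive definite case, and it is a natural way to establish the result directly. Whether it matches the argument in the cited technical report cannot be determined from the present paper, but your proof stands on its own and requires no external input beyond the definitions.
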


The proof can be found, e.g, in \cite{chengyi}.

\begin{corollary} {\rm (see \cite{chengyi})}
Let $A\in \complex^{n\times n}$ be non-Hermitian positive definite,
and let $A=M-N$ be a splitting with $N\succeq0$. Then
$\rho(M^{-1}N)<1.$
\end{corollary}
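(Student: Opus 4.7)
The plan is to reduce this corollary directly to Theorem~\ref{main_thm}. Since $N\succeq 0$ means $N$ is Hermitian positive semidefinite, the hypothesis ``$N$ Hermitian'' of Theorem~\ref{main_thm} is already in hand; what remains is to verify that $A=M-N$ is indeed a $P$-regular splitting, and to check along the way that $M$ is nonsingular so the splitting is well-defined.

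First I would rewrite $M=A+N$ and form $M^*+N=A^*+N+N=A^*+2N$ (using $N^*=N$). Its Hermitian part is
\begin{equation*}
\tfrac{1}{2}\bigl((A^*+2N)+(A+2N)\bigr)=\tfrac{1}{2}(A+A^*)+2N=H+2N.
\end{equation*}
Since $A$ is non-Hermitian positive definite, $H\succ 0$, and since $N\succeq 0$, we get $H+2N\succ 0$, so $M^*+N$ is positive definite. This is exactly $P$-regularity.

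Next, for nonsingularity of $M$, I would observe that the Hermitian part of $M=A+N$ is $H+N\succ 0$, so $M$ itself is non-Hermitian positive definite and therefore invertible. Hence $A=M-N$ is a genuine $P$-regular splitting with $N$ Hermitian, and Theorem~\ref{main_thm} yields $\rho(M^{-1}N)<1$.

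There is no real obstacle here; the only point that requires a moment of care is remembering to use $N^*=N$ when computing $M^*+N$, and to note that ``$N\succeq 0$'' in this paper's conventions already encodes Hermiticity (as stated in Section~\ref{prelimi-sec}), so no extra symmetrization argument is needed.
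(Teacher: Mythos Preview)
Your proposal is correct and matches what the paper intends: the corollary is stated immediately after Theorem~\ref{main_thm} with no separate proof (only a citation), so the implied argument is precisely the reduction you carry out---verify $P$-regularity from $N\succeq 0$ and apply Theorem~\ref{main_thm}. Your check that $M$ is nonsingular via $H+N\succ 0$ is a nice extra bit of care.
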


\begin{theorem} \label{ortega} {\rm (see \cite{{{ortega11}}} and \cite{A.B})}
Let $A\in \complex^{n\times n}$ such that $A=M-N$ is an extended
P-regular splitting. Then $\rho(T)<1,$ where $T=M^{-1}N$, if and
only if $A$ is positive definite.
\end{theorem}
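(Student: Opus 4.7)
The plan is to reduce the claim to a Stein/Lyapunov-type equivalence by means of a single algebraic identity that relates the extended $P$-regular hypothesis to the Stein operator applied to the Hermitian part of $A$. Write $H = (A+A^*)/2$ and $W := M^*(A^{-1})^*A + N$, so that the extended $P$-regular hypothesis reads $W + W^* \succ 0$. Note that invertibility of $A$ is already implicit in the definition of the splitting (since $A^{-1}$ appears in $W$), so $B := I - T = M^{-1}A$ is invertible as well.

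First I would carry out the algebra. Using $N = M - A$ and $M = AB^{-1}$, a direct computation yields
\begin{equation*}
W + W^* = 2\bigl(B^{-*}H + HB^{-1} - H\bigr).
\end{equation*}
Multiplying this on the left by $B^*$ and on the right by $B$, the cancellations produced by $B = I - T$ collapse the right-hand side to $2(H - T^*HT)$, giving the key identity
\begin{equation*}
B^*(W + W^*)B = 2\,(H - T^*HT).
\end{equation*}
Since $B$ is invertible, this says the extended $P$-regular condition $W + W^* \succ 0$ is exactly equivalent to the Stein condition $H - T^*HT \succ 0$.

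With this equivalence in hand, both directions follow quickly. For the $(\Leftarrow)$ direction, assume $A$ is positive definite so that $H \succ 0$; testing any eigenvector $Tv = \lambda v$ against $H - T^*HT \succ 0$ gives $(1 - |\lambda|^2)\,v^*Hv > 0$, and $v^*Hv > 0$ forces $|\lambda| < 1$, so $\rho(T) < 1$. For the $(\Rightarrow)$ direction, set $Q := H - T^*HT \succ 0$ and note that $H$ itself satisfies the Stein equation $P - T^*PT = Q$; when $\rho(T) < 1$ this equation has the unique Hermitian solution
\begin{equation*}
P \;=\; \sum_{k=0}^{\infty} T^{*k} Q\, T^{k},
\end{equation*}
which is positive definite because $Q$ is. By uniqueness $H$ coincides with this series and is therefore positive definite, i.e., $A$ is positive definite.

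The main obstacle is establishing the identity $B^*(W + W^*)B = 2(H - T^*HT)$: the four cross terms in $T$, $T^*$, $HT$, and $T^*H$ must cancel exactly, and one should verify that the identity uses only the invertibility of $A$ and no a priori positivity assumption on $H$. Once that identity is on the table, the theorem reduces to a textbook application of Stein's theorem together with the explicit series formula for its solution.
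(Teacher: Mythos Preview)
The paper does not supply its own proof of this theorem; it is quoted as a known result from \cite{ortega11} and \cite{A.B}. Your argument is correct, and in fact your central identity is precisely the Hermitian part of Lemma~\ref{loveluo1} stated later in the paper (also quoted from \cite{A.B}): from
\[
A - T^*AT \;=\; (I-T)^*\bigl(M^*(A^{-1})^*A + N\bigr)(I-T)
\]
one obtains, upon adding the conjugate transpose, exactly your $2(H - T^*HT) = B^*(W+W^*)B$. So your approach coincides with the standard route implicit in the paper's cited sources, and the remaining Stein/Lyapunov steps you give for both directions are the textbook ones.

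One point worth highlighting: you obtain the invertibility of $B=I-T$ directly from $B=M^{-1}A$, using only that $A$ and $M$ are nonsingular. This is the correct way to argue here. By contrast, when the paper later \emph{applies} Lemma~\ref{loveluo1} inside the proof of Theorem~\ref{theorem9}, it first invokes the present Theorem~\ref{ortega} to get $\rho(T_k)<1$ and only then concludes that $I-T_k$ is nonsingular. That detour is fine in the paper's context (where Theorem~\ref{ortega} is taken as given), but would be circular as part of a proof of Theorem~\ref{ortega} itself; your direct observation avoids that issue.
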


\begin{remark}
The condition that $A=M-N$ is an extended P-regular splitting can be
replaced by the condition that $M+N^*(A^{-1})^*A$ is positive
definite since $A=M-N$ is an extended P-regular splitting,
\begin{equation}\label{new1}
\begin{array}{lll}
M^*(A^{-1})^*A+N&=&(A+N)^*(A^{-1})^*A+N\\
&=&M+N^*(A^{-1})^*A
\end{array}
\end{equation}  is positive definite.
\end{remark}

\begin{theorem} \label{theorem6} {\rm (see \cite{{{jinyunyuan}}})}
Let $A\in \complex^{n\times n}$ be nonsingular, and let $A=M-N$ such
that $M^*A+A^*N=M^*M-N^*N$ is positive definite. Then Then
$\rho(T)<1,$ where $T=M^{-1}N$.
\end{theorem}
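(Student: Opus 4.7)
The plan is to give a direct Rayleigh-quotient style argument on an eigenpair of $T=M^{-1}N$, exploiting the fact that the stated identity $M^*A+A^*N=M^*M-N^*N$ turns the hypothesis into a clean positivity statement about $M^*M-N^*N$.

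First I would verify the algebraic identity: since $A=M-N$, we have $A^*=M^*-N^*$, and hence
\begin{equation*}
M^*A+A^*N=M^*(M-N)+(M^*-N^*)N=M^*M-N^*N,
\end{equation*}
so the hypothesis is equivalent to $M^*M-N^*N\succ 0$. Before proceeding I would record that this forces $M$ to be nonsingular: if $Mx=0$ for some $x\neq 0$, then $x^*(M^*M-N^*N)x=-\|Nx\|_2^2\leq 0$, contradicting positive definiteness. Hence $T=M^{-1}N$ is well-defined.

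Next, for any eigenvalue $\lambda$ of $T$ with eigenvector $x\neq 0$, the relation $M^{-1}Nx=\lambda x$ rearranges to $Nx=\lambda Mx$. Evaluating the positive definite quadratic form at $x$ yields
\begin{equation*}
0<x^*(M^*M-N^*N)x=\|Mx\|_2^2-\|Nx\|_2^2=\bigl(1-|\lambda|^2\bigr)\|Mx\|_2^2.
\end{equation*}
Since $M$ is nonsingular and $x\neq 0$, $\|Mx\|_2^2>0$, so $1-|\lambda|^2>0$, i.e.\ $|\lambda|<1$. As $\lambda$ was an arbitrary eigenvalue, $\rho(T)<1$.

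There is no real obstacle here; the argument is almost a one-liner once the identity is written down. The only point that requires a sentence of care is the nonsingularity of $M$, which is not stated among the hypotheses but is forced by positive definiteness of $M^*M-N^*N$; everything else is the standard trick of sandwiching $\lambda$ between the quadratic forms $\|Mx\|_2^2$ and $\|Nx\|_2^2=|\lambda|^2\|Mx\|_2^2$.
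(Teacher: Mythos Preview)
Your proof is correct. The paper itself does not supply a proof for this theorem; it is quoted in the preliminaries as a known result from Yuan \cite{jinyunyuan}, so there is nothing to compare against directly. One minor streamlining you could make: once you have $M^*M-N^*N\succ 0$ and $M$ nonsingular, conjugating by $M^{-1}$ gives $I-T^*T=(M^{-1})^*(M^*M-N^*N)M^{-1}\succ 0$, i.e.\ $\|T\|_2<1$, which yields $\rho(T)\le\|T\|_2<1$ without the need to pick an eigenpair; this is the same computation written globally rather than pointwise, and is closer in spirit to how the paper invokes this condition later (see the proof of Theorem~\ref{theorem8}, where the single-splitting case reduces to a Stein-type positivity statement).
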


A convergence result on multisplitting method for nonsymmetric
positive definite linear system is introduced by Hadjidimos and
Yeyios \cite{hadjidimos}.

\begin{theorem} \label{theorem7} {\rm (see \cite{hadjidimos})}
Let $A=(a_{ij})\in \mathbb{R}^{n\times n}$ be nonsymmetric positive
definite with a multisplitting $(M_k,N_k,E_k)_{k=1}^{m}$ satisfying
\begin{equation}\label{multi12}
\begin{array}{lll}
M_k&=&D+\rho_kI-L,\ \ N_k=\rho_kI+U,\ \ 1\leq k\leq m;\\
M_k&=&D+\rho_kI-U,\ \ N_k=\rho_kI+L,\ \ m+1\leq k\leq 2m;\\
\rho_k&>&\left\{
\begin{array}{ll}
max\{0,-\eta_m/\lambda_m\} for 1\leq k\leq m\\
max\{0,-\theta_m/\lambda_m\} for m+1\leq k\leq 2m
\end{array}
\right.,\ \ E_k=\alpha_kI,
\end{array}
\end{equation}
where $D=diag(A)$, $L,\ U$ are strictly lower and upper triangular
matrices satisfying $A=D-L-U$; $\lambda_m$ is the minimal eigenvalue
of $A+A^T$ and $\eta_m,\ \theta_m$ are the minimal eigenvalues of
the matrices $(D-L)(D-L)^T-UU^T$ and $(D-U)(D-U)^T-LL^T$,
respectively. Then Algorithm \ref{algorithm1} converges.

\end{theorem}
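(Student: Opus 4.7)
The plan is to prove $\rho(T)<1$ for the iteration matrix $T = \sum_{k=1}^{2m} E_k M_k^{-1} N_k$ by bounding its spectral norm. Since $E_k=\alpha_k I$ with $\alpha_k\ge 0$ and $\sum_{k=1}^{2m}\alpha_k=1$, the iteration matrix is the convex combination $T=\sum_k \alpha_k M_k^{-1}N_k$, and subadditivity of $\|\cdot\|_2$ reduces the task to verifying $\|M_k^{-1}N_k\|_2<1$ for every $k$. I would use the equivalence
\[
\|M_k^{-1}N_k\|_2<1 \quad\Longleftrightarrow\quad M_k M_k^T \succ N_k N_k^T,
\]
obtained by writing $\|M_k^{-1}N_k\|_2^2=\lambda_{\max}(M_k^{-1}N_k N_k^T M_k^{-T})$ and conjugating by $M_k$ and $M_k^T$. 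This is the natural $\|\cdot\|_2$ analogue of the spectral-radius criterion in Theorem \ref{theorem6}.

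The heart of the proof is the identity
\[
M_k M_k^T - N_k N_k^T = (D-L)(D-L)^T - UU^T + 2\rho_k H \qquad (1\le k\le m),
\]
where $H=(A+A^T)/2$. To derive it I would expand $M_k = D+\rho_k I-L$ and $N_k=\rho_k I+U$: the $\rho_k^2 I$ contributions cancel between $M_k M_k^T$ and $N_k N_k^T$, and the linear-in-$\rho_k$ cross terms collect into $2\rho_k D-\rho_k(L+L^T+U+U^T)$. Since $A=D-L-U$ gives $L+L^T+U+U^T=2D-(A+A^T)=2(D-H)$, these reduce exactly to $+2\rho_k H$, while the $\rho_k$-free remainder is precisely $(D-L)(D-L)^T-UU^T$. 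Weyl's inequality for sums of Hermitian matrices then yields
\[
\lambda_{\min}\bigl(M_k M_k^T-N_k N_k^T\bigr) \;\ge\; \eta_m + \rho_k \lambda_m,
\]
and since $A$ is positive definite we have $\lambda_m>0$, so the bound $\rho_k>\max\{0,-\eta_m/\lambda_m\}$ in (\ref{multi12}) forces this minimum eigenvalue to be strictly positive. The symmetric computation with $L$ and $U$ swapped produces $M_k M_k^T-N_k N_k^T=(D-U)(D-U)^T-LL^T+2\rho_k H$ for $m+1\le k\le 2m$, with $\theta_m$ replacing $\eta_m$, and the same Weyl estimate applies.

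Combining the pieces gives $\|M_k^{-1}N_k\|_2<1$ for each $k$, hence $\|T\|_2\le\sum_k\alpha_k\|M_k^{-1}N_k\|_2<\sum_k\alpha_k=1$, so $\rho(T)\le\|T\|_2<1$ and Algorithm \ref{algorithm1} converges. The main obstacle is the algebraic bookkeeping behind the key identity for $M_k M_k^T-N_k N_k^T$: one has to track the cancellations among the $\rho_k^2 I$ pieces and the various cross terms, and crucially recognize that $L+L^T+U+U^T=2(D-H)$ is precisely what makes the linear-$\rho_k$ contributions reassemble as $2\rho_k H$. Once that identity is in hand, Weyl's inequality and the triangle inequality for the convex combination produce the conclusion almost mechanically.
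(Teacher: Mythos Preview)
Your proof is correct and follows essentially the same route as the paper indicates (via the Remark after Theorem~\ref{theorem8}): verify $\|M_k^{-1}N_k\|_2<1$ for each $k$ using the identity $M_kM_k^T-N_kN_k^T=(D-L)(D-L)^T-UU^T+\rho_k(A+A^T)$, then invoke the convex-combination bound of Theorem~\ref{theorem8}(ii). The paper does not spell out the algebra, deferring instead to the proof of Theorem~2.4 in \cite{hadjidimos}, but your computation is exactly what is needed there.
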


\section{Convergence of stationary multisplitting method}\label{convergence-sec}
In this section we discuss convergence of the parallel
mulitisplitting iterative methods for non-Hermitian linear systems,
especially, non-Hermitian positive definite linear systems.

\begin{theorem} \label{theorem8}
Let $A\in \complex^{n\times n}$ be nonsingular with a multisplitting
$(M_k,N_k,E_k)_{k=1}^{m}$ satisfying one of the following
conditions:

{\rm(i)} $E_k=\beta_k I$ and $M_k^*A+A^*N_k=M_k^*M_k-N_k^*N_k\succ0$
for $k=1,2,\cdots,m$;

{\rm(ii)} $E_k=\beta_k I$ and $\|M_k^{-1}N_k\|_2<1$ for
$k=1,2,\cdots,m$.\\
Then Algorithm \ref{algorithm1} converges to the unique solution of
{\rm(\ref{r1})} for any choice of the initial guess $x^{(0)}$.
\end{theorem}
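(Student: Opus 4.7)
The plan is to show that the iteration matrix $T = \sum_{k=1}^m E_k M_k^{-1} N_k = \sum_{k=1}^m \beta_k M_k^{-1} N_k$ has spectral radius less than one; since $E_k = \beta_k I$ with $\beta_k \ge 0$ and $\sum_k \beta_k = 1$, this $T$ is a convex combination of the individual matrices $M_k^{-1} N_k$, and convergence of Algorithm~\ref{algorithm1} to the unique solution of~(\ref{r1}) then follows by standard theory.

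Case~(ii) is immediate: by the triangle inequality for the spectral norm,
\[
\|T\|_2 \;\le\; \sum_{k=1}^m \beta_k \,\|M_k^{-1} N_k\|_2 \;<\; \sum_{k=1}^m \beta_k \;=\; 1,
\]
so $\rho(T) \le \|T\|_2 < 1$.

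Case~(i) is more delicate, because the hypothesis $M_k^*M_k - N_k^*N_k \succ 0$ only yields $\rho(M_k^{-1}N_k) < 1$ via Theorem~\ref{theorem6}, while the spectral radius is not subadditive under convex combinations (indeed, $\|M_k^{-1}N_k\|_2$ can exceed $1$ in this regime, so the case~(ii) estimate is unavailable). My strategy is to collapse the multisplitting into a single ``averaged'' splitting. Setting $W := \sum_{k=1}^m \beta_k M_k^{-1}$, I would rewrite
\[
T \;=\; \sum_{k=1}^m \beta_k M_k^{-1}(M_k - A) \;=\; I - WA,
\]
so that, once $W$ is shown to be invertible, the pair $\tilde M := W^{-1}$, $\tilde N := W^{-1} - A$ forms a splitting $A = \tilde M - \tilde N$ with $\tilde M^{-1}\tilde N = T$. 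It then suffices to verify the hypothesis $\tilde M^* A + A^* \tilde N \succ 0$ of Theorem~\ref{theorem6} for this averaged splitting.

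The core of the argument, and where the main obstacle lies, is promoting the per-$k$ hypothesis to this averaged inequality on $W$. Using $A = M_k - N_k$, the assumption $M_k^*M_k - N_k^*N_k \succ 0$ rewrites as $M_k^*A + A^*M_k - A^*A \succ 0$; multiplying on the left by $M_k^{-*}$ and on the right by $M_k^{-1}$, and forming the $\beta_k$-weighted sum, gives
\[
AW + W^*A^* \;-\; \sum_{k=1}^m \beta_k\,(AM_k^{-1})^*(AM_k^{-1}) \;\succ\; 0.
\]
The matrix Jensen inequality $\sum_k \beta_k X_k^*X_k \succeq \bigl(\sum_k \beta_k X_k\bigr)^*\bigl(\sum_k \beta_k X_k\bigr)$, a one-line consequence of expanding $\sum_k \beta_k(X_k - \bar X)^*(X_k - \bar X) \succeq 0$ with $\bar X = \sum_k \beta_k X_k$, applied with $X_k = AM_k^{-1}$, yields
\[
\sum_{k=1}^m \beta_k\,(AM_k^{-1})^*(AM_k^{-1}) \;\succeq\; W^*A^*AW,
\]
and combining the two displays produces $AW + W^*A^* - W^*A^*AW \succ 0$. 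Evaluating this strict inequality on any vector in $\ker W$ forces $\ker W = \{0\}$, so $W$ is invertible; a congruence by $W^{-1}$ then yields $W^{-*}A + A^*W^{-1} - A^*A \succ 0$, which is exactly $\tilde M^*A + A^*\tilde N = \tilde M^*\tilde M - \tilde N^*\tilde N \succ 0$. Theorem~\ref{theorem6} then gives $\rho(T) = \rho(\tilde M^{-1}\tilde N) < 1$, completing the proof.
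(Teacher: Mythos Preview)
Your proof is correct. Case~(ii) matches the paper verbatim. For case~(i), the paper takes a different route: it directly expands
\[
A^*A - T^*A^*AT \;=\; S_1 + S_2,
\]
where $S_1 = \sum_k \beta_k A^*(M_k^{-1})^*(M_k^*A + A^*N_k)M_k^{-1}A \succ 0$ by hypothesis and congruence, and the Hermitian part of $S_2$ is shown to equal $\tfrac12\sum_{k\neq j}\beta_k\beta_j A^*(M_k^{-1}-M_j^{-1})^*A^*A(M_k^{-1}-M_j^{-1})A \succeq 0$ by a symmetrization computation; Stein's theorem then gives $\rho(T)<1$. Your approach instead collapses the multisplitting to a single splitting via $W=\sum_k\beta_k M_k^{-1}$ and $T=I-WA$, applies the matrix Jensen inequality to transfer the per-$k$ hypothesis to $W$, and invokes Theorem~\ref{theorem6}. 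The two arguments are equivalent at the core---your Jensen step $\sum_k\beta_k X_k^*X_k \succeq \bar X^*\bar X$ is exactly the identity underlying the paper's $S_2$ calculation, and since $A^*A - T^*A^*AT = A^*(AW+W^*A^*-W^*A^*AW)A$, you are verifying the same Stein certificate in disguise. What your organization buys is a cleaner narrative (reduce to the single-splitting theorem rather than re-derive its proof inside the multisplitting context) and avoidance of the somewhat intricate $S_2$ symmetrization; the paper's direct computation, on the other hand, makes the positive-definiteness witness $A^*A$ explicit without needing the intermediate invertibility of $W$.
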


\begin{proof} The iteration matrix of
Algorithm \ref{algorithm1} is $T=\sum_{k=1}^mE_kM_k^{-1}N_k$.
Algorithm \ref{algorithm1} is convergent by showing $\rho(T)<1.$

1) Assume that the condition (i) holds. Then, similar to the proof
of Theorem 1 in \cite{o'leary}, one has
\begin{equation}\label{love1}
\begin{array}{lll}
A^*A&-&T^*A^*AT\\ &=&A^*A-(\sum_{k=1}^mE_kM_k^{-1}N_k)^*A^*A(\sum_{k=1}^mE_kM_k^{-1}N_k)\\
&=&A^*A-(I-\sum_{k=1}^mE_kM_k^{-1}A)^*A^*A(I-\sum_{k=1}^mE_kM_k^{-1}A)\\
&=&\sum_{k=1}^m\beta_k(A^*AM_k^{-1}A+A^*(M_k^{-1})^*A^*A)\\&&-\sum_{k,j=1}^m\beta_k\beta_jA^*(M_k^{-1})^*A^*AM_j^{-1}A\\
&=&\sum_{k=1}^m\beta_kA^*(M_k^{-1})^*(M_k^{*}A+A^*M_k-\beta_kA^*A)M_k^{-1}A\\&&-\sum_{k,j=1;k\neq j}^m\beta_k\beta_jA^*(M_k^{-1})^*A^*AM_j^{-1}A\\
&=&\sum_{k=1}^m\beta_kA^*(M_k^{-1})^*(M_k^{*}A+A^*N_k+\sum_{j=1;j\neq k}^m\beta_jA^*A)M_k^{-1}A\\&&-\sum_{k,j=1;k\neq j}^m\beta_k\beta_jA^*(M_k^{-1})^*A^*AM_j^{-1}A\\
&=&\sum_{k=1}^m\beta_kA^*(M_k^{-1})^*(M_k^{*}A+A^*N_k)M_k^{-1}A\\&&+\sum_{k,j=1;k\neq j}^m\beta_k\beta_jA^*(M_k^{-1})^*A^*A(M_k^{-1}-M_j^{-1})A\\
&=&S_1+S_2,
\end{array}
\end{equation}
where
\begin{equation}\label{love2}
\begin{array}{lll}
S_1&=&\sum_{k=1}^m\beta_kA^*(M_k^{-1})^*(M_k^{*}A+A^*N_k)M_k^{-1}A\ \ \  \  {\rm and}\\
S_2&=&\sum_{k,j=1;k\neq
j}^m\beta_k\beta_jA^*(M_k^{-1})^*A^*A(M_k^{-1}-M_j^{-1})A.
\end{array}
\end{equation}
Since $A$ and $M_k$ are nonsingular,
$M_k^*A+A^*N_k=M_k^*M_k-N_k^*N_k\succ0$ for $k=1,2,\cdots,m$, it is
easy to see
\begin{equation}\label{love2b}S_1=\sum_{k=1}^m\beta_kA^*(M_k^{-1})^*(M_k^{*}A+A^*N_k)M_k^{-1}A\succ0.
\end{equation}
It is observed that
\begin{equation}\label{love2a}
\begin{array}{lll}
2S&=&S_2^*+S_2\\&=&\sum_{k,j=1;k\neq
j}^m\beta_k\beta_j[A^*(M_k^{-1}-M_j^{-1})^*A^*AM_k^{-1}A\\&&+A^*(M_k^{-1})^*A^*A(M_k^{-1}-M_j^{-1})A]\\
&=&\sum_{k,j=1;k\neq
j}^m\beta_k\beta_j[A^*(M_k^{-1}-M_j^{-1})^*A^*A(M_k^{-1}-M_j^{-1})A\\&&+A^*(M_k^{-1})^*A^*A(M_k^{-1}-M_j^{-1})A+A^*(M_k^{-1}-M_j^{-1})^*A^*AM_j^{-1}A]\\
&=&\sum_{k,j=1;k\neq
j}^m\beta_k\beta_jA^*(M_k^{-1}-M_j^{-1})^*A^*A(M_k^{-1}-M_j^{-1})A\\&&+\sum_{k,j=1;k\neq
j}^m\beta_k\beta_j[A^*(M_k^{-1})^*A^*AM_k^{-1}A-A^*(M_j^{-1})^*A^*AM_j^{-1}A]\\
&=&\sum_{k,j=1;k\neq
j}^m\beta_k\beta_jA^*(M_k^{-1}-M_j^{-1})^*A^*A(M_k^{-1}-M_j^{-1})A\\
&\succeq &0.
\end{array}
\end{equation}
As a result, (\ref{love1}), (\ref{love2b}) and (\ref{love2a})
indicate $A^*A-T^*A^*AT\succ 0.$ Since $A$ is nonsingular,
$A^*A\succ 0.$ It follows from Stein's Theorem (see, e.g.,
\cite{{ortega},{stein}}) that $T$ is convergent, i.e., $\rho(T)<1.$
Thus, Algorithm \ref{algorithm1} is convergent.

2) Now, assume that the condition (ii) holds, that is, $E_k=\beta_k
I$ and $\|M_k^{-1}N_k\|_2<1$ for $k=1,2,\cdots,m$. Thus, we have
\begin{equation}\label{love2c}
\begin{array}{lll}
\rho(T)&\leq &\|T\|_2=\|\sum_{k=1}^mE_kM_k^{-1}N_k\|_2\\
&\leq&\sum_{k,=1}^m\|E_kM_k^{-1}N_k\|_2\\
&=&\sum_{k=1}^m\beta_k\|M_k^{-1}N_k\|_2\\
&<&\sum_{k=1}^m\beta_k=1,
\end{array}
\end{equation}
which shows that Algorithm \ref{algorithm1} is convergent. This
completes the proof.
\end{proof}

\begin{remark}
If the multisplitting $(M_k,N_k,E_k)_{k=1}^{m}$ is defined by
(\ref{multi12}), we have $\|M_k^{-1}N_k\|_2<1$ for $k=1,2,\cdots,m$
(see the proof of Theorem 2.4 in \cite{hadjidimos}) and can obtain
the proof of Theorem \ref{theorem7} coming from Theorem
\ref{theorem8}. Therefore, similar to (\ref{multi12}), we can
construct a multisplitting $(M_k,N_k,E_k)_{k=1}^{m}$ to satisfy the
condition (ii) of Theorem \ref{theorem8}.
\end{remark}

In what follows some convergence results on parallel mulitisplitting
method for non-Hermitian positive definite linear systems will be
established. At first, the following lemma will be used in this
section.

\begin{lemma}\label{loveluo1} {\rm (see \cite{{A.B}})}
Let $A=M-N\in \complex^{n\times n}$ with $A$ and $M$ nonsingular and
let $T=M^{-1}N$. Then $A-T^*AT=(I-T^*)(M^*(A^{-1})^*A+N)(I-T)$.
\end{lemma}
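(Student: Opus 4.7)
The plan is to prove the identity by direct algebraic expansion of the right-hand side, using the two basic relations that follow from the splitting $A = M - N$ and the definition $T = M^{-1}N$: namely, $MT = N$ (and its adjoint $T^*M^* = N^*$), and $M(I-T) = M - N = A$ (and its adjoint $(I-T^*)M^* = A^*$).

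The first step is to split the middle factor of the right-hand side and treat the two resulting pieces separately. The $M^*(A^{-1})^*A$ piece collapses cleanly: using $(I-T^*)M^* = M^* - N^* = A^*$ together with $A^*(A^{-1})^* = (A^{-1}A)^* = I$, the factor $(I-T^*)M^*(A^{-1})^*A$ telescopes to $A$, so the first piece becomes simply $A(I-T) = A - AT$. The remaining piece is the expansion $(I-T^*)N(I-T) = N - T^*N - NT + T^*NT$.

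The second step is to match the sum $A - AT + N - T^*N - NT + T^*NT$ against the desired $A - T^*AT$. For the terms without $T^*$ on the left, I group $N - AT - NT = N - (A+N)T = N - MT = 0$ since $MT = N$. For the terms with $T^*$ on the left, I use $T^*AT = T^*(M-N)T = T^*MT - T^*NT = T^*N - T^*NT$ (again by $MT = N$), which is exactly $-(-T^*N + T^*NT)$. Putting these two cancellations together yields $A - T^*AT$, as claimed.

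There is no real obstacle here; the entire lemma is a bookkeeping identity, and the only ingredient beyond linear algebra is the pair of telescoping identities $(I-T^*)M^* = A^*$ and $MT = N$. The mildly delicate point is choosing the grouping of the six expanded terms so that the cancellations are visible in one pass rather than forcing a second round of rewriting; the grouping above (splitting the six terms according to whether $T^*$ appears on the left) achieves this.
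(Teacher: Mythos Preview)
Your proof is correct: the telescoping identities $(I-T^*)M^* = A^*$ and $MT = N$ are exactly what is needed, and your grouping of the six expanded terms makes the two cancellations transparent. The paper does not supply its own proof of this lemma; it simply cites the result from Berman and Plemmons, so there is nothing to compare against beyond noting that your direct verification is entirely adequate.
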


\begin{theorem} \label{theorem9}
Let $A\in \complex^{n\times n}$ be non-Hermitian positive definite
with a multisplitting $(M_k,N_k,E_k)_{k=1}^{m}$ satisfying the
condition that $E_k=\beta_k I$ and $A=M_k-N_k$ is an extended
P-regular splitting for all $k=1,2,\cdots,m.$ Then Algorithm
\ref{algorithm1} converges to the unique solution of {\rm(\ref{r1})}
for any choice of the initial guess $x^{(0)}$.
\end{theorem}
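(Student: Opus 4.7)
The plan is to prove $\rho(T)<1$ for the iteration matrix $T=\sum_{k=1}^{m}\beta_k T_k$ (where $T_k:=M_k^{-1}N_k$) by producing a Hermitian positive-definite Lyapunov matrix that $T$ contracts. The natural candidate is $H=(A+A^*)/2$, which satisfies $H\succ 0$ since $A$ is non-Hermitian positive definite. The one-line bridge is that the Hermitian part of $A-T^*AT$ is exactly $H-T^*HT$, so once I establish that $A-T^*AT$ has positive definite Hermitian part, Stein's theorem delivers $\rho(T)<1$ immediately.

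First I would apply Lemma~\ref{loveluo1} to each single splitting $A=M_k-N_k$, yielding
\[
A-T_k^*AT_k=(I-T_k^*)\bigl(M_k^*(A^{-1})^*A+N_k\bigr)(I-T_k).
\]
Extended $P$-regularity means the middle factor has positive definite Hermitian part, while Theorem~\ref{ortega} (applied with $A$ positive definite) gives $\rho(T_k)<1$, so $I-T_k$ is invertible. The Hermitian part of a congruence $Y^*BY$ equals $Y^*\mathrm{Herm}(B)Y$, so taking Hermitian parts of the identity above yields
\[
H-T_k^*HT_k=(I-T_k^*)\,\mathrm{Herm}\bigl(M_k^*(A^{-1})^*A+N_k\bigr)(I-T_k)\succ 0
\]
for every $k$.

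Next I would pass from the individual $T_k$ to $T$ via a polarization argument. Using $\sum_k\beta_k=1$ I expand
\[
A-T^*AT=\sum_{k,j=1}^{m}\beta_k\beta_j(A-T_k^*AT_j)
\]
and symmetrize the off-diagonal pairs with the identity $T_k^*AT_j+T_j^*AT_k=T_k^*AT_k+T_j^*AT_j-(T_k-T_j)^*A(T_k-T_j)$. A short bookkeeping step (using $\beta_k^2+\sum_{j\neq k}\beta_k\beta_j=\beta_k$) then produces the decomposition
\[
A-T^*AT=\sum_{k=1}^{m}\beta_k(A-T_k^*AT_k)+\sum_{1\le k<j\le m}\beta_k\beta_j(T_k-T_j)^*A(T_k-T_j).
\]
Taking Hermitian parts, the first sum becomes $\sum_k\beta_k(H-T_k^*HT_k)$, which is positive definite by the single-splitting bound, and the second becomes $\sum_{k<j}\beta_k\beta_j(T_k-T_j)^*H(T_k-T_j)$, which is positive semidefinite because $H\succ 0$. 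Hence $H-T^*HT\succ 0$ and Stein's theorem yields $\rho(T)<1$, proving convergence of Algorithm~\ref{algorithm1}.

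The main obstacle I anticipate is the polarization/bookkeeping step: one must verify that after symmetrizing the mixed terms $T_k^*AT_j$ ($k\neq j$) the coefficient of each $A-T_k^*AT_k$ collapses cleanly to $\beta_k$ and the leftovers assemble into Hermitian congruences of $A$ by the differences $T_k-T_j$. This is the only place where the multisplitting structure interacts nontrivially with the non-Hermitian nature of $A$; the rest—the single-splitting identity from Lemma~\ref{loveluo1}, the passage to Hermitian parts, and the Stein/Lyapunov conclusion—is essentially routine.
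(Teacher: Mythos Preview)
Your argument is correct. The first half—applying Lemma~\ref{loveluo1} and Theorem~\ref{ortega} to each splitting and then taking Hermitian parts to obtain $H-T_k^*HT_k\succ 0$—coincides with the paper's proof verbatim. The divergence is only in how the individual bounds are combined. The paper does not polarize: instead it rewrites $H-T_k^*HT_k\succ 0$ as the operator-norm statement $\|H^{1/2}T_kH^{-1/2}\|_2<1$ and then bounds
\[
\rho(T)=\rho\bigl(H^{1/2}TH^{-1/2}\bigr)\le\Bigl\|\sum_{k}\beta_k H^{1/2}T_kH^{-1/2}\Bigr\|_2\le\sum_{k}\beta_k\|H^{1/2}T_kH^{-1/2}\|_2<1
\]
by the triangle inequality. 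Your quadratic-form decomposition is a legitimate alternative and has the minor advantage of displaying the slack explicitly through the cross terms $(T_k-T_j)^*H(T_k-T_j)$; the paper's norm argument is shorter and bypasses precisely the bookkeeping you identified as the main obstacle.
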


\begin{proof}
Since the multisplitting $(M_k,N_k,E_k)_{k=1}^{m}$ satisfies the
condition that $E_k=\beta_k I$ and $A=M_k-N_k$ is an extended
$P-$regular splitting for all $k=1,2,\cdots,m,$
$M_k^*(A^{-1})^*A+N_k$ is positive definite. Let $T_k=M_k^{-1}N_k$
for all $k=1,2,\cdots,m$, then the iteration matrix of Algorithm
\ref{algorithm1} is
\begin{equation}\label{love3}T=\sum_{k=1}^mE_kT_k=\sum_{k=1}^m\beta_kT_k.
\end{equation}
It follows from Lemma \ref{loveluo1} that
\begin{equation}\label{love3b}
\begin{array}{lll}
A-T_k^*AT_k=(I-T_k^*)(M_k^*(A^{-1})^*A+N_k)(I-T_k).
\end{array}
\end{equation}
Again, Theorem \ref{ortega} shows that $\rho(T_k)<1$, and
consequently $I-T_k$ is nonsingular. Since $M_k^*(A^{-1})^*A+N_k$ is
positive definite, (\ref{love3b}) shows that $A-T_k^*AT_k$ is also
positive definite for $k=1,2,\cdots,m$. Noting that $A$ is positive
definite, its Hermitian part $H=(A+A^*)/2\succ0.$ Then,
(\ref{love3b}) shows \begin{equation}\label{love3c}
\begin{array}{lll}
H-T_k^*HT_k\succ0,\ \ \ \ k=1,2,\cdots,m,
\end{array}
\end{equation}
which indicates
\begin{equation}\label{love3d}
\begin{array}{lll}
I\succ(H^{1/2}T_kH^{-1/2})^*(H^{-1/2}T_kH^{1/2})\ \ \ \
k=1,2,\cdots,m.
\end{array}
\end{equation}
As a result,
\begin{equation}\label{love3e}
\begin{array}{lll}
\|H^{1/2}T_kH^{-1/2}\|_2<1\ \ \ \ k=1,2,\cdots,m.
\end{array}
\end{equation}
Therefore, we have with (\ref{love3}) and (\ref{love3e}) that
\begin{equation}\label{love3h}
\begin{array}{lll}
\rho(T)&=&\rho(H^{1/2}TH^{-1/2})\leq\|H^{1/2}TH^{-1/2}\|_2\\&=&\|\sum_{k=1}^m\beta_kH^{1/2}T_kH^{-1/2}\|_2\\
&\leq&\sum_{k=1}^m\beta_k\|H^{1/2}T_kH^{-1/2}\|_2\\
&<&\sum_{k=1}^m\beta_k=1,
\end{array}
\end{equation}
which shows Algorithm \ref{algorithm1} is convergent. This completes
the proof.
\end{proof}

In Theorems \ref{theorem8} and \ref{theorem9}, some conditions such
that Algorithm \ref{algorithm1} converges have been presented. But,
it is difficult for us to construct a multisplitting such that these
conditions are easy to determine since they concern very complex
matrix operations. In the following, we will propose a practical
condition which is are easy to determine such that Algorithm
\ref{algorithm1} converges.

For the convenience of the proof of the following theorems, we will
introduce a type of block matrices-----{\em extended $H-$matrices}
which is further extension of {\em generalized $M-$matrices} and
{\em generalized $H-$matrices} introduced by Elsner and Mehrmann
\cite{L.V5} and Nabben \cite{R.N11}, respectively.

\begin{definition}  {\rm (see \cite{{L.V5}})}
\begin{enumerate}
\item $Z_m^k=\{A=[A_{ij}]\in \complex^{km\times
km}\ |\ A_{ij}\in C^{k\times k}\ is\ Hermitian\ for\ all\ i,j\in
N=\{1,2,\cdots,m\}\ and$ $A_{ij}\preceq0\ for\ all\ i\neq j,\ i,j\in
N\}$;
\item $\widehat{Z}_m^k=\{A=[A_{ij}]\in Z_m^k\ |\ A_{ii}\succ0,\
i\in N\}$;
\item $M_m^k=\{A\in \widehat{Z}_m^k\ |\ there\ exists\ u\in \mathbb{R}_+^m\ such\ that\ \sum_{j=1}^{m}u_jA_{ij}\succ0\ for\ all\
i\in N\}$, where $\mathbb{R}_+^m$ denotes all positive vectors in
$\mathbb{R}^m$, and a matrix $A\in \widehat{Z}_m^k$ is called a
generalized $M-$matrix if $A\in M_m^k$.
\end{enumerate}
\end{definition}

\begin{definition}
We define a set of $n\times n$ matrices $\Omega^n=\{\ A\in
\complex^{n\times n}\ |\ there\ exsits$ $a~nonsingular~matrix~C\in
\complex^{n\times n}~such~that~A=C^*DC,~where~
D=diag(d_1,\cdots,d_n)$ $\in\complex^{n\times n}\}$. Let $A\in
\Omega^n$. {\rm Then there must exsit a~nonsingular~matrix~$C\in
\complex^{n\times n}$~such~that  $A=C^*DC$,~where~
$D=diag(d_1,\cdots,d_n)\in \complex^{n\times n}$}. Define
\begin{equation}\label{love3i}
\begin{array}{lll}
\langle A\rangle:=C^*|D|C,
\end{array}
\end{equation}
where $|D|=diag(|d_1|,\cdots,|d_n|)\in \mathbb{R}^{n\times n}$.
\end{definition}

\begin{remark}
The set $\Omega^n$ includes many families of matrices such as
unitary matrices, Hermitian matrices, skew-Hermitian matrices,
normal matrices and positive definite matrices (not necessarily
Hermitian, see Theorem 3 in \cite{L.J}).
\end{remark}

\begin{definition}
\begin{enumerate}
\item $\Phi_m^k=\{A=[A_{ij}]\in \complex^{km\times
km}\ |\ A_{ij}\in \Omega^k\ for\ all\ i,j\in N=\{1,2,\cdots,m\}$
$and\ A_{ii}\ is\ positve\ definite\ for\ all\ i\in N\}$;
\item $H_m^k=\{A\in \Phi_m^k\ |\ \mu(A)\in M_m^k\}$, where $\mu(A)=[M_{ij}]\in \complex^{mk\times mk}$ is the block comparison matrix of $A$ and defined as
$$
M_{ij}:=\left\{
\begin{array}{cc}
(A_{ii}+A_{ii}^*)/2,\ \ & \ \ {\rm if}\ \ i=j\\
\langle{A}_{ij}\rangle, \ \ & \ \ {\rm if} \ \ i\neq j
\end{array} \right.,$$  and a
matrix $A\in \Phi_m^k$ is called an extended $H-$matrix if $A\in
H_m^k$.
\end{enumerate}
\end{definition}

\begin{lemma}\label{lemma4}
Let $B=C^*DC\in \complex^{n\times n}$ with $C\in \complex^{n\times
n}$ nonsingular and $D={\rm diag}(d_1,\ldots,d_n)\in
\complex^{n\times n}$, and let $\langle{B}\rangle=C^*|D|C\in
\complex^{n\times n}$ with $|D|={\rm diag}(|d_1|,\ldots,|d_n|)$.
Then the Hermitian matrix $\mathscr{B}=
\left \lbrack \begin{array}{cc} \langle{B}\rangle & e^{it}B \\
                                e^{-it}B^* & \langle{B}\rangle \end{array}\right \rbrack$
is positive semidefinite for all $t\in \mathbb{R}$.
\end{lemma}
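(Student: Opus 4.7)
The plan is to reduce the problem, by a simultaneous congruence, to a direct sum of $2\times 2$ Hermitian blocks that can be checked by hand.

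First I would write $\mathscr{B}$ in factored form. Since $B=C^{*}DC$ and $\langle B\rangle=C^{*}|D|C$, we may factor
\begin{equation*}
\mathscr{B}=\left\lbrack\begin{array}{cc} C^{*} & 0 \\ 0 & C^{*}\end{array}\right\rbrack
\left\lbrack\begin{array}{cc} |D| & e^{it}D \\ e^{-it}D^{*} & |D|\end{array}\right\rbrack
\left\lbrack\begin{array}{cc} C & 0 \\ 0 & C\end{array}\right\rbrack.
\end{equation*}
Because $C$ is nonsingular, the block-diagonal matrix $\mathrm{diag}(C,C)$ is nonsingular, and $\mathscr{B}$ is $*$-congruent to the middle factor. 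Since $*$-congruence preserves inertia (and hence positive semidefiniteness), it suffices to prove that the Hermitian matrix
\begin{equation*}
\mathscr{D}(t):=\left\lbrack\begin{array}{cc} |D| & e^{it}D \\ e^{-it}D^{*} & |D|\end{array}\right\rbrack
\end{equation*}
is positive semidefinite for every $t\in\mathbb{R}$.

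Next I would exploit the diagonal structure of $D$. Writing $D=\mathrm{diag}(d_1,\ldots,d_n)$, a suitable symmetric permutation of rows and columns transforms $\mathscr{D}(t)$ into the block-diagonal matrix
\begin{equation*}
P^{*}\mathscr{D}(t)P=\bigoplus_{j=1}^{n}\mathscr{D}_j(t),\qquad
\mathscr{D}_j(t):=\left\lbrack\begin{array}{cc} |d_j| & e^{it}d_j \\ e^{-it}\overline{d_j} & |d_j|\end{array}\right\rbrack.
\end{equation*}
Thus $\mathscr{D}(t)\succeq 0$ if and only if each $2\times 2$ block $\mathscr{D}_j(t)$ is positive semidefinite.

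Finally, I would verify positive semidefiniteness of $\mathscr{D}_j(t)$ directly using the trace/determinant criterion for $2\times 2$ Hermitian matrices: the diagonal entries $|d_j|$ are nonnegative, and
\begin{equation*}
\det \mathscr{D}_j(t)=|d_j|^{2}-e^{it}d_j\cdot e^{-it}\overline{d_j}=|d_j|^{2}-|d_j|^{2}=0.
\end{equation*}
Hence each $\mathscr{D}_j(t)\succeq 0$ (in fact of rank at most one), so $\mathscr{D}(t)\succeq 0$ and therefore $\mathscr{B}\succeq 0$, as required. There is no real obstacle here; the only thing one has to notice is that the scalar $e^{it}$ is irrelevant because $|e^{it}|=1$ makes the determinant vanish regardless of $t$, which is exactly why the conclusion holds uniformly in $t\in\mathbb{R}$.
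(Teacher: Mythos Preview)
Your proof is correct and follows essentially the same approach as the paper: factor $\mathscr{B}$ as a $*$-congruence via $\mathrm{diag}(C,C)$, reduce to the diagonal middle factor $\mathscr{D}(t)$, apply a permutation (the paper calls it the odd-even permutation) to split it into a direct sum of $2\times 2$ blocks, and check each block. The only difference is that you explicitly compute the determinant of each $2\times 2$ block, whereas the paper simply declares them ``obviously positive semidefinite.''
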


\begin{proof}
Observe that $\mathscr{B}$ can be decomposed as
\begin{equation}\label{eq3w}
\begin{array}{llll}
\mathscr{B}&=&
\left \lbrack \begin{array}{cc} \langle{B}\rangle & e^{it}B \\
                                e^{-it}B^* & \langle{B}\rangle \end{array}\right \rbrack=\left \lbrack \begin{array}{cc} C^* & 0 \\
                                0 & C^* \end{array}\right
                                \rbrack\left \lbrack \begin{array}{cc} |D| & e^{it}D \\
                                e^{-it}D^* & |D| \end{array}\right
                                \rbrack\left \lbrack \begin{array}{cc} C & 0 \\
                                0 & C \end{array}\right
                                \rbrack\\&=&\mathscr{C}^*\left \lbrack \begin{array}{cc} |D| & e^{it}D \\
                                e^{-it}D^* & |D| \end{array}\right
                                \rbrack\mathscr{C},
                                \end{array}
\end{equation}
where $\mathscr{C}=\left \lbrack \begin{array}{cc} C & 0 \\
                                0 & C \end{array}\right
                                \rbrack$ is nonsingular since $C$ is.
Writing
$\mathscr{D} = \left \lbrack \begin{array}{cc} |D| & e^{it}D \\
                                e^{-it}D^* & |D| \end{array}\right
                                \rbrack$,
(\ref{eq3w}) shows that the Hermitian matrices $\mathscr{B}$ and
$\mathscr{D}$ are congruent, and therefore they must have the same
inertia. Hence, all we need to show is that $\mathscr{D}$ is
positive semidefinite.  Letting $\mathscr{P}$ denote the odd-even
permutation matrix of order $2n$, it is immediate to see that
$$\mathscr{P}^*\mathscr{D}\mathscr{P}
=\left \lbrack \begin{array}{cc} |d_1| & e^{it}d_1 \\ e^{-it}\bar
d_1 & |d_1|\end{array}\right
 \rbrack \oplus \cdots  \oplus
\left \lbrack \begin{array}{cc} |d_n| & e^{it}d_n \\ e^{-it}\bar d_n
& |d_n|\end{array}\right
 \rbrack  .$$
Hence, $\mathscr{P}^*\mathscr{D}\mathscr{P}$ is just a direct sum of
$n$ two-by-two Hermitian matrices, each of which is obviously
positive semidefinite. This shows that $\mathscr{D} \succeq 0$, and
the proof is complete.
\end{proof}

\begin{lemma}\label{lemma1}
Let $A=[A_{ij}]\in {\Phi}_m^k$. For $t\in R,$ define
\begin{equation}\label{1wd2}{A}_t=B+B^*-(e^{it}C+e^{-it}C^*),\end{equation}
 where $B=diag(B_{11},\cdots,B_{mm})$ and $C=B-{A}=[C_{ij}]\in \complex^{km\times
km}$ with $C_{ii}=C_{ii}^*$ for all $i=1,\cdots,m$. Let
$\tilde{C}=[\tilde{C}_{ij}]\in \complex^{km\times km}$ with
$\tilde{C}_{ij}=\langle C_{ij}\rangle$ for all $i,j=1,\cdots,m,$ and
let $\tilde{A}=B-\tilde{C}$. If $\mu(\tilde{A})+\mu(\tilde{A}^*)\in
M_m^k$, then $A_t\succ 0$ for all $t\in R.$
\end{lemma}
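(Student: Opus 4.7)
The plan is to show that $x^* A_t x > 0$ for every nonzero $x\in\complex^{km}$ blocked as $x=(x_1,\ldots,x_m)$ with $x_i\in\complex^k$, by combining Lemma \ref{lemma4} with the positive witness vector $u\in\mathbb{R}^m_+$ supplied by the hypothesis $\mu(\tilde A)+\mu(\tilde A^*)\in M_m^k$. First I would expand
$$x^* A_t x = \sum_i x_i^*(B_{ii}+B_{ii}^*)x_i - 2\cos t\,\sum_i x_i^* C_{ii} x_i - 2\,{\rm Re}\sum_{i\neq j} e^{it} x_i^* C_{ij} x_j,$$
using that every $C_{ii}$ is Hermitian. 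Applying Lemma \ref{lemma4} to $C_{ii}$ with the test pair $(v,w)=(x_i,x_i)$ (and its $t\mapsto t+\pi$ companion) yields $|\cos t\cdot x_i^* C_{ii} x_i|\leq x_i^*\langle C_{ii}\rangle x_i$, since the left-hand side is real; hence the diagonal part is bounded below by $2\sum_i x_i^* D_i x_i$, where $D_i:=(B_{ii}+B_{ii}^*)/2-\langle C_{ii}\rangle$.

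For the off-diagonal cross terms I would apply Lemma \ref{lemma4} to each $C_{ij}$ on the test pair $(v,w)=(\sqrt{\alpha_{ij}}\,x_i,\,\alpha_{ij}^{-1/2} x_j)$ with the weight $\alpha_{ij}:=u_j/u_i$, obtaining for each $i\neq j$
$$\bigl|2\,{\rm Re}(e^{it} x_i^* C_{ij} x_j)\bigr|\leq \frac{u_j}{u_i}\,x_i^*\langle C_{ij}\rangle x_i + \frac{u_i}{u_j}\,x_j^*\langle C_{ij}\rangle x_j.$$
Summing over all ordered pairs $i\neq j$ and relabeling the $j$-indexed half of the right-hand side gives
$$-2\,{\rm Re}\sum_{i\neq j} e^{it} x_i^* C_{ij} x_j \;\geq\; -\sum_i \frac{1}{u_i}\,x_i^*\Bigl(\sum_{j\neq i} u_j\bigl(\langle C_{ij}\rangle+\langle C_{ji}\rangle\bigr)\Bigr) x_i.$$

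Combining the two estimates and recognizing that $2u_i D_i-\sum_{j\neq i} u_j(\langle C_{ij}\rangle+\langle C_{ji}\rangle)$ is precisely the $i$-th row-sum $\sum_j u_j[\mu(\tilde A)+\mu(\tilde A^*)]_{ij}$ (this is where both the definition of the block comparison matrix and the Hermitian conjugation in $\tilde A^*$ are used), I arrive at
$$x^* A_t x \;\geq\; \sum_i \frac{1}{u_i}\,x_i^*\Bigl(\sum_j u_j\,[\mu(\tilde A)+\mu(\tilde A^*)]_{ij}\Bigr) x_i,$$
and the hypothesis $\mu(\tilde A)+\mu(\tilde A^*)\in M_m^k$ forces every bracketed block to be positive definite, so $x^* A_t x>0$ for every nonzero $x$ and every $t\in\mathbb{R}$. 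The main obstacle I expect is the bookkeeping in the second step: the weights $\alpha_{ij}=u_j/u_i$ must be arranged so that after summing over ordered pairs and swapping the dummy index on the $x_j$-contribution, the accumulated bound reassembles exactly as the $i$-th weighted row-sum of $\mu(\tilde A)+\mu(\tilde A^*)$, and one must further check that the sign convention used in the definition of the block comparison matrix $\mu$ is compatible with the generalized $M$-matrix structure of $M_m^k$, so the resulting strict inequality reads $2u_i D_i\succ\sum_{j\neq i} u_j(\langle C_{ij}\rangle+\langle C_{ji}\rangle)$ for every $i$.
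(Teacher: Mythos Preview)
Your proposal is correct and follows essentially the same strategy as the paper: both arguments use the positive witness vector coming from $\mu(\tilde A)+\mu(\tilde A^*)\in M_m^k$ together with Lemma~\ref{lemma4} to absorb the $e^{it}$-rotated off-diagonal blocks into the diagonal surplus. The only difference is presentational: the paper first performs the symmetric diagonal scaling $K_t=VA_tV$ and then exhibits an explicit operator decomposition $K_t\succeq \Delta+\sum_{i>j}R_{ij}+\sum_{i<j}S_{ij}$ with $\Delta\succ0$ and each $R_{ij},S_{ij}\succeq0$ supplied by Lemma~\ref{lemma4}, whereas you work directly with the quadratic form $x^*A_tx$ and apply Lemma~\ref{lemma4} with the weighted test vectors $(\sqrt{u_j/u_i}\,x_i,\sqrt{u_i/u_j}\,x_j)$; after unwinding, your weighted Cauchy--Schwarz estimates are exactly the scalar shadows of the paper's $R_{ij},S_{ij}$ blocks under the $V$-scaling.
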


\begin{proof} Let $\tilde{A}=[\tilde{A}_{ij}]\in \complex^{km\times km}$. Then
\begin{equation}\label{aad1}
\tilde{A}_{ii}=B_{ii}-\tilde{C}_{ii}=B_{ii}-\langle{C}_{ii}\rangle\
\ {\rm and} \ \
\tilde{A}_{ij}=\tilde{C}_{ij}=\langle{C}_{ij}\rangle=\langle{A}_{ij}\rangle,\
\ i\neq j,
\end{equation}
for $i,j=1,\cdots,m.$ Since $\mu(\tilde{A})+\mu(\tilde{A}^*)\in
M_m^k$, there exists a vector $v=(v_1,\cdots,v_m)^T\in R^m_+$ such
that
\begin{equation}\label{61p}
v_i(\tilde{A}_{ii}+\tilde{A}_{ii}^*)-\sum\limits_{j=1,j\neq
i}^{m}v_j(\langle \tilde{A}_{ij}\rangle+\langle
A_{ji}^*\rangle)\succ0
\end{equation}
for all $i\in N=\{1,\cdots,m\}$. Multiply the inequality (\ref{61p})
by $v_i$ and define $V=diag(v_1I_k,\cdots,v_mI_k)$, where $I_k$ is
$k\times k$ identity matrix, such that $K=V\tilde{A}V$ satisfies
\begin{equation}\label{1eq3}
v_i^2(\tilde{A}_{ii}+\tilde{A}_{ii}^*)-\sum\limits_{j=1,j\neq
i}^{m}[\langle
v_i\tilde{A}_{ij}v_j\rangle+\langle(v_j\tilde{A}_{ji}v_i)^*\rangle]\succ0
\end{equation}
for all $i\in N$. Let $K=V\tilde{A}V=[K_{ij}]$ with
$K_{ij}=v_i\tilde{A}_{ij}v_j$ for all $i,j\in N$. Then following
(\ref{1eq3}), we have
\begin{equation}\label{2b111}
R_i(K)=(K_{ii}+K_{ii}^*)-\sum\limits_{j=1,j\neq i}^{m}[\langle
K_{ij}\rangle+\langle K_{ji}\rangle]\succ0,\ \ \ i=1,\cdots,m.
\end{equation}
Furthermore, according to (\ref{1wd2}), we have
\begin{equation}\label{multi35}
\begin{array}{llll}
K_t&=&VA_tV=V[B+B^*-(e^{it}C+e^{-it}C^*)]V\\
&=&V[B+B^*-(e^{it}+e^{-it})D_C-(e^{it}\hat{C}+e^{-it}\hat{C}^*)]V,
 \end{array}
 \end{equation}
 where $D_C=diag(C_{11},\cdots,C_{mm})$ and $\hat{C}=C-D_C.$
Since $C={A}-B$ and $C_{ii}=C_{ii}^*$ for all $i=1,\cdots,m$,
$C_{ii}=B_{ii}-A_{ii}$ and hence, coming from (\ref{aad1}), we have
\begin{equation}\label{multi36}
\begin{array}{llll}
B_{ii}+B^*_{ii}-(e^{it}+e^{-it})C_{ii}\succeq
B_{ii}+B^*_{ii}-2\langle
C_{ii}\rangle=\tilde{A}_{ii}+\tilde{A}^*_{ii}\succ 0
 \end{array}
 \end{equation}
for all $i=1,\cdots,m$. Thus,
\begin{equation}\label{multi37}
\begin{array}{llll}
B+B^*-(e^{it}+e^{-it})D_C\succeq D_{\tilde{A}}+D_{\tilde{A}}^*\succ
0,
 \end{array}
 \end{equation}
where $D_{\tilde{A}}=diag(\tilde{A}_{11},\cdots,\tilde{A}_{mm}).$
(\ref{aad1}), (\ref{multi35}) and (\ref{multi37}) imply
\begin{equation}\label{multi38}
\begin{array}{llll}
K_t&\succeq &V[D_A+D_A^*-(e^{it}\hat{C}+e^{-it}\hat{C}^*)]V\\
&=&\Delta+\sum\limits_{ i>j}R_{ij} +\sum\limits_{ i<j}S_{ij},
 \end{array}
 \end{equation}
where \begin{equation}\label{d122}
\begin{array}{llll}
\Delta&=&diag\{R_1(K),\cdots,R_m(K)\},\\
R_{ij}&=&\left[
 \begin{array}{ccccccccc}
 \ 0 & \cdots & 0 & 0 & \cdots & 0 & 0 & \cdots & 0 \\
 \ \vdots & \ddots & \vdots & \vdots & \vdots & \vdots & \vdots & \vdots & \vdots \\
 \ 0 & \cdots & \langle K_{ij}\rangle & 0& \cdots & e^{-it}K^*_{ij} & 0 & \cdots &0\\
 \ 0 & \cdots & 0 & 0 & \cdots & 0 & 0 & \cdots & 0 \\
\ \vdots & \vdots & \vdots & \vdots & \ddots & \vdots & \vdots & \vdots & \vdots \\
 \ 0 & \cdots & e^{it}K_{ij} & 0& \cdots & \langle K_{ij}\rangle & 0 & \cdots &0\\
 \ 0 & \cdots & 0 & 0 & \cdots & 0 & 0 & \cdots & 0 \\
\ \vdots & \vdots & \vdots & \vdots & \vdots & \vdots & \vdots & \ddots & \vdots \\
 \ 0 & \cdots & 0 & 0 & \cdots & 0 & 0 & \cdots & 0
 \end{array}
 \right]
  \end{array}
 \end{equation} and
 \begin{equation}\label{d123}
 S_{ij}=\left[
 \begin{array}{ccccccccc}
 \ 0 & \cdots & 0 & 0 & \cdots & 0 & 0 & \cdots & 0 \\
 \ \vdots & \ddots & \vdots & \vdots & \vdots & \vdots & \vdots & \vdots & \vdots \\
 \ 0 & \cdots & \langle K_{ij}\rangle & 0& \cdots & e^{it}K_{ij} & 0 & \cdots &0\\
 \ 0 & \cdots & 0 & 0 & \cdots & 0 & 0 & \cdots & 0 \\
\ \vdots & \vdots & \vdots & \vdots & \ddots & \vdots & \vdots & \vdots & \vdots \\
 \ 0 & \cdots & e^{-it}K^*_{ij} & 0& \cdots & \langle K_{ij}\rangle & 0 & \cdots &0\\
 \ 0 & \cdots & 0 & 0 & \cdots & 0 & 0 & \cdots & 0 \\
\ \vdots & \vdots & \vdots & \vdots & \vdots & \vdots & \vdots & \ddots & \vdots \\
 \ 0 & \cdots & 0 & 0 & \cdots & 0 & 0 & \cdots & 0
 \end{array}
 \right].
 \end{equation}
Since (\ref{2b111}) yields $\Delta\succ0$ and Lemma \ref{lemma4}
indicates $R_{ij}\succeq0$ and $ S_{ij}\succeq0$, $K_t\succ0$ and
hence $A_t\succ0$ for all $t\in R.$ This completes the proof.
\end{proof}

\begin{lemma}\label{lemma2} {\rm (see \cite{{R.N11}})}
Let $A,\ M,\ N\in C^{n\times n}$ with $A=M-N$. If for all $t\in R$
\begin{equation}\label{1k1}A_t:=M+M^H-(e^{it}N+e^{-it}N^H)>0,
\end{equation}
then $\rho(M^{-1}N)<1.$ If $A_t\geq 0$ for all $t\in R$, then
$\rho(M^{-1}N)\leq1.$
\end{lemma}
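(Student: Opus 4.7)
The plan is to bound every eigenvalue $\lambda$ of $T = M^{-1}N$ in modulus using the positivity of $A_t$ for a carefully chosen $t$.

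First I would verify that the hypothesis actually implies $M$ is nonsingular, so that $M^{-1}N$ is well defined. Averaging $A_t$ over $t \in [0, 2\pi]$ kills the $e^{\pm it}$ terms and yields $\frac{1}{2\pi}\int_0^{2\pi} A_t\,dt = M + M^*$. Since $A_t \succ 0$ pointwise and depends continuously on $t$, this average is Hermitian positive definite, so $M$ has positive definite Hermitian part and is nonsingular. (In the semidefinite case one gets only $M + M^* \succeq 0$, but the argument below still controls $|\lambda|$ provided a nonzero eigenvector exists for $M^{-1}N$; if $M$ happens to be singular one instead works directly with the generalized eigenvalue problem $Nx = \lambda Mx$.)

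Next, let $\lambda$ be any eigenvalue of $T$ with eigenvector $x \neq 0$, so $Nx = \lambda Mx$ and hence $x^*N^* = \bar\lambda\, x^*M^*$. Setting $\alpha := x^*Mx$, I would compute
\begin{equation*}
x^* A_t x \;=\; x^*Mx + x^*M^*x - e^{it} x^*Nx - e^{-it} x^*N^*x \;=\; 2\,\mathrm{Re}(\alpha) \;-\; 2\,\mathrm{Re}\bigl(e^{it}\lambda\,\alpha\bigr).
\end{equation*}
Now I choose $t$ so that $e^{it}\lambda\,\alpha$ is a nonnegative real number of modulus $|\lambda||\alpha|$ (if either $\lambda$ or $\alpha$ is zero, pick any $t$). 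With this choice
\begin{equation*}
x^* A_t x \;=\; 2\,\mathrm{Re}(\alpha) - 2|\lambda|\,|\alpha| \;\le\; 2|\alpha|\bigl(1 - |\lambda|\bigr).
\end{equation*}
If $|\lambda| \ge 1$, the right-hand side is $\le 0$, contradicting $A_t \succ 0$ (which forces $x^*A_tx > 0$). Hence $|\lambda| < 1$, and since $\lambda$ was arbitrary, $\rho(T) < 1$. For the weak case $A_t \succeq 0$, the same calculation gives $2|\alpha|(1 - |\lambda|) \ge 0$, whence $|\lambda| \le 1$ and $\rho(T) \le 1$.

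The main subtlety I anticipate is the nonsingularity of $M$ and the edge case $\alpha = x^*Mx = 0$. For the first, the averaging trick above is the cleanest fix; for the second, when $\alpha = 0$ the identity $\lambda\alpha = x^*Nx$ forces $x^*Nx = 0$ as well, and then $x^*A_tx = 0$ for every $t$, which already contradicts strict positivity of $A_t$ unless $x = 0$ — so no such eigenvector exists in the strict case, and the semidefinite case imposes no obstruction on $|\lambda|$ beyond $|\lambda| \le 1$ trivially from continuity as one perturbs to the strict case.
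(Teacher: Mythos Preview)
The paper does not prove this lemma; it is quoted from Nabben \cite{R.N11} and used as a black box, so there is no in-paper argument to compare against. Your eigenvector calculation is the standard one and is correct for the strict case: the averaging over $t$ cleanly yields $M+M^*\succ 0$, which both guarantees invertibility of $M$ and rules out $\alpha = x^*Mx = 0$, after which $x^*A_tx = 2\,\mathrm{Re}(\alpha) - 2|\lambda|\,|\alpha| > 0$ forces $|\lambda|<1$.

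The semidefinite part is where your write-up gets loose. The phrase ``from continuity as one perturbs to the strict case'' is the right instinct but, as written, is not a proof: you have not said what is being perturbed or why the spectral radius passes to the limit. The clean fix is to replace $M$ by $M_\varepsilon = M + \tfrac{\varepsilon}{2}I$, observe that the corresponding $A_t^{(\varepsilon)} = A_t + \varepsilon I \succ 0$, apply the strict case to get $\rho(M_\varepsilon^{-1}N)<1$, and then let $\varepsilon\to 0$ using continuity of eigenvalues (which requires $M$ nonsingular, an assumption implicit in the very expression $\rho(M^{-1}N)$). Your remark about passing to the generalized eigenvalue problem $Nx=\lambda Mx$ when $M$ is singular does not rescue the statement as phrased, since $\rho(M^{-1}N)$ is then undefined; it is better simply to note that nonsingularity of $M$ must be assumed in the semidefinite clause.
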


\begin{lemma}\label{lemma3}
Let $A=[A_{ij}]\in M_m^k$. Then there exist two positive diagonal
matrices $E=diag(E_1,\cdots,E_m)$ and $F=diag(F_1,\cdots,F_m)$,
where $E_i=e_iI\in \mathbb{R}^{k\times k}$ and $F_i=f_iI\in
\mathbb{R}^{k\times k}$ with $e_i>0$ and $f_i>0$ for all
$i=1,\cdots,m,$ such that $EAF+FA^*E\in M_m^k.$
\end{lemma}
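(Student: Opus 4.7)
My plan is to choose $E$ and $F$ as positive scalar-block-diagonal matrices built directly from the vector $u$ witnessing $A\in M_m^k$ together with an analogous vector $w$ witnessing the ``block transpose'' property $A^*\in M_m^k$, and then to verify that the all-ones vector serves as an $M_m^k$-witness for $T:=EAF+FA^*E$. Since $A\in M_m^k\subset\widehat{Z}_m^k$, the definition gives some $u=(u_1,\ldots,u_m)^T\in\mathbb{R}_+^m$ with $\sum_{j=1}^m u_jA_{ij}\succ 0$ for every $i\in N$. I first record the sub-claim that there also exists $w=(w_1,\ldots,w_m)^T\in\mathbb{R}_+^m$ satisfying $\sum_{j=1}^m w_jA_{ji}\succ 0$ for every $i$; this is the block analog of the classical fact that the transpose of a nonsingular $M$-matrix is itself a nonsingular $M$-matrix.

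Granting the sub-claim, set $E=\mathrm{diag}(w_1I_k,\ldots,w_mI_k)$ and $F=\mathrm{diag}(u_1I_k,\ldots,u_mI_k)$, both positive scalar-block-diagonal. Because $E$ and $F$ are real and diagonal, $(EAF)^*=FA^*E$, so $T$ is Hermitian. Block-wise $T_{ii}=2u_iw_iA_{ii}\succ 0$ and, for $i\neq j$,
\[
T_{ij}=w_iu_jA_{ij}+u_iw_jA_{ji},
\]
which is Hermitian and $\preceq 0$ since each of its two summands is Hermitian and $\preceq 0$. Hence $T\in\widehat{Z}_m^k$. Taking the candidate witness $v=(1,\ldots,1)^T\in\mathbb{R}_+^m$ one finds
\[
\sum_{j=1}^m v_jT_{ij}=w_i\sum_{j=1}^m u_jA_{ij}+u_i\sum_{j=1}^m w_jA_{ji},
\]
which is the sum of two positive-definite blocks by the defining properties of $u$ and of $w$, and is therefore itself positive definite. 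Consequently $T\in M_m^k$, which is the desired conclusion.

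The main obstacle is the sub-claim $A\in M_m^k\Rightarrow A^*\in M_m^k$. For $k=1$ it reduces to the classical fact that $A$ is a nonsingular $M$-matrix iff $A^T$ is, which one proves either from $A^{-1}\geq 0\Leftrightarrow (A^T)^{-1}\geq 0$ or from the identity $\rho(D^{-1}N)=\rho(D^{-1}N^T)$ for the Jacobi-type splitting $A=D-N$. I intend to lift the spectral-radius approach to the block setting: write $A=D-N$ with $D=\mathrm{diag}(A_{11},\ldots,A_{mm})\succ 0$ block-diagonal and $N$ given by $N_{ii}=0$, $N_{ij}=-A_{ij}\succeq 0$ for $i\neq j$; use the existence of $u$ to show that $D^{-1}N$ has spectral radius strictly less than $1$ (a block Perron--Frobenius statement of the kind developed in the Elsner--Mehrmann framework already cited in the paper); then transfer the bound to $D^{-1}N^*$ via $\rho(XY)=\rho(YX)$ and extract a positive block-Perron vector $w$ with $A^*w\succ 0$. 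Once this sub-claim is secured, the two paragraphs above close the proof.
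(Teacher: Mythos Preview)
The paper gives no self-contained proof of this lemma; it merely cites Lemma~3.1 of Huang, Shen and Li. So there is nothing in-paper to compare against. Your reduction is clean and the computation following the sub-claim is correct, but the sub-claim ``$A\in M_m^k\Rightarrow A^*\in M_m^k$'' on which everything rests is \emph{false} for $k\ge 2$. Take $m=k=2$ with all blocks real diagonal:
\[
A_{11}=\mathrm{diag}(1,4),\quad A_{22}=I_2,\quad A_{12}=\mathrm{diag}(0,-2),\quad A_{21}=\mathrm{diag}(-1,0).
\]
The row conditions for $A$ reduce to $2u_1>u_2$ and $u_2>u_1$, satisfied whenever $u_1/u_2\in(1/2,1)$, so $A\in M_2^2$. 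For $A^*$ the off-diagonal blocks swap and the conditions become $w_1>w_2$ together with $w_2>2w_1$, which is impossible; hence $A^*\notin M_2^2$ and your construction has no $w$ to work with. (The lemma itself still holds for this $A$: e.g.\ $E=\mathrm{diag}(\frac{3}{4}I_2,I_2)$, $F=I$ gives $EAF+FA^*E\in M_2^2$ with witness $(1,1)$ --- but that is not your $E,F$.)

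The counterexample also pinpoints where your spectral-radius plan for the sub-claim breaks. The chain $A\in M_m^k\Rightarrow\rho(D^{-1}N)<1\Rightarrow\rho(D^{-1}N^*)<1$ is fine, but the final step ``extract a positive scalar witness $w$ for $A^*$ from $\rho(D^{-1}N^*)<1$'' is invalid in the block setting: the scalar Perron--Frobenius argument relies on entrywise nonnegativity of $D^{-1}N^T$, whereas here the blocks $A_{ii}^{-1}(-A_{ji})$ of $D^{-1}N^*$ are not even Hermitian, and the example above exhibits $\rho(D^{-1}N^*)<1$ coexisting with $A^*\notin M_m^k$. Passing through the spectral radius loses exactly the information you need. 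A genuinely different device is required to manufacture $E$ and $F$.
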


\begin{proof}
The proof can be immediately obtained from Lemma 3.1 in
\cite{Huang}.
\end{proof}

\begin{theorem}\label{theorem1x}
Let $A=[A_{ij}]\in {\Phi}_m^k$, $B=diag(B_{11},\cdots,B_{mm})$ and
$C=B-{A}=[C_{ij}]\in \complex^{km\times km}$ with $C_{ii}=C_{ii}^*$
for all $i=1,\cdots,m$. Assume $\tilde{C}=[\tilde{C}_{ij}]\in
\complex^{km\times km}$ with $\tilde{C}_{ij}=\langle C_{ij}\rangle$
for all $i,j=1,\cdots,m,$ and let $\tilde{A}=B-\tilde{C}$. If
$\tilde{A}\in H_m^k$, then $\rho(B^{-1}C)<1.$
\end{theorem}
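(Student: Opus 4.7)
The plan is to reduce the conclusion $\rho(B^{-1}C)<1$ to Lemma \ref{lemma2}, which needs the positive-definiteness of $B+B^{*}-(e^{it}C+e^{-it}C^{*})$ for every real $t$. Lemma \ref{lemma1} supplies exactly this conclusion but under the \emph{symmetrized} hypothesis $\mu(\tilde A)+\mu(\tilde A^{*})\in M_m^k$, whereas $\tilde A\in H_m^k$ only gives $\mu(\tilde A)\in M_m^k$. Lemma \ref{lemma3} is the natural bridge: applied to $\mu(\tilde A)$, it produces positive block-scalar diagonals $E=\mathrm{diag}(e_1I,\dots,e_mI)$ and $F=\mathrm{diag}(f_1I,\dots,f_mI)$ such that $E\mu(\tilde A)F+F\mu(\tilde A)^{*}E\in M_m^k$.

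I would then rescale the splitting by setting $A':=EAF$, $B':=EBF$, $C':=ECF$, so that $A'=B'-C'$ with $B'$ block diagonal and $C'_{ii}=e_if_iC_{ii}=(C'_{ii})^{*}$. The key algebraic observation is that $\langle\cdot\rangle$, $\mu$, and the adjoint interact cleanly with block-scalar positive scaling: since $\langle cY\rangle=c\langle Y\rangle$ for $c>0$ and $\mu(Y^{*})=\mu(Y)^{*}$, one verifies block-by-block that $\mu(EXF)=E\mu(X)F$ and $\mu((EXF)^{*})=F\mu(X)^{*}E$ for every $X\in\Phi_m^k$. In particular $\tilde A'=B'-\tilde C'=E\tilde AF$ and
\[
\mu(\tilde A')+\mu(\tilde A'^{*})=E\mu(\tilde A)F+F\mu(\tilde A)^{*}E\in M_m^k
\]
by the choice of $E,F$. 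A quick check of the definitions also confirms $A'\in\Phi_m^k$, so Lemma \ref{lemma1} is genuinely applicable to $A'$.

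Applying Lemma \ref{lemma1} to the primed system then yields $A'_t:=B'+B'^{*}-(e^{it}C'+e^{-it}C'^{*})\succ 0$ for every $t\in\mathbb{R}$, and Lemma \ref{lemma2} (with $M=B'$, $N=C'$) converts this into $\rho(B'^{-1}C')<1$. Since $B'^{-1}C'=(EBF)^{-1}(ECF)=F^{-1}(B^{-1}C)F$ is similar to $B^{-1}C$, we conclude $\rho(B^{-1}C)=\rho(B'^{-1}C')<1$, as required.

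The main obstacle I expect is the algebraic bookkeeping in the middle paragraph: carefully verifying that $\langle\cdot\rangle$ is positive homogeneous (immediate from the factorization $C^{*}|D|C$), that $\mu$ commutes with block-scalar positive scaling in the sense above, and that the auxiliary matrix $\tilde A'$ associated with the rescaled splitting is exactly $E\tilde AF$. Once these identities are in place, Lemmas \ref{lemma1}--\ref{lemma3} chain together almost mechanically, and the final step is a one-line similarity argument.
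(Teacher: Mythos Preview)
Your proposal is correct and follows essentially the same route as the paper: use Lemma~\ref{lemma3} on $\mu(\tilde A)$ to produce the block-scalar diagonals $E,F$, rescale the splitting to $EAF=EBF-ECF$, invoke Lemma~\ref{lemma1} on the rescaled data to get $\hat A_t\succ0$, apply Lemma~\ref{lemma2} for $\rho((EBF)^{-1}(ECF))<1$, and finish by the similarity $(EBF)^{-1}(ECF)=F^{-1}(B^{-1}C)F$. The paper's proof is slightly terser about the bookkeeping identities $\mu(E\tilde AF)=E\mu(\tilde A)F$ and $\langle cY\rangle=c\langle Y\rangle$ that you flag explicitly, but the argument is the same.
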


\begin{proof} Since $\tilde{A}\in H_m^k$, $\mu(\tilde{A})\in M_m^k$. Lemma
\ref{lemma3} shows that there exist two positive diagonal matrices
$E=diag(E_1,\cdots,E_m)$ and $F=diag(F_1,\cdots,F_m)$, where
$E_i=e_iI\in \mathbb{R}^{k\times k}$ and $F_i=f_iI\in
\mathbb{R}^{k\times k}$ with $e_i>0$ and $f_i>0$ for all
$i=1,\cdots,m,$ such that
$E\mu(\tilde{A})F+F\mu(\tilde{A})^*E=\mu(E\tilde{A}F)+\mu(F\tilde{A}^*E)\in
M_m^k.$ $A=B-C$ yields $EAF=EBF-ECF$. Let $\hat{{A}}=EAF,\
\hat{{B}}=EBF$ and $\hat{{C}}=ECF$. Since
$\hat{{B}}=EBF=diag(\hat{{B}}_{11},\cdots,\hat{{B}}_{mm})$ with
$\hat{{B}}_{ii}=e_i{B}_{ii}f_i$ for all $i=1,\cdots,m$ and
$\hat{{C}}=\hat{{B}}-\hat{{A}}$, it follows from Lemma \ref{lemma1}
that
\begin{equation}\label{multi23}{\hat{{A}}}_t=\hat{{B}}+\hat{{B}}^*-(e^{it}\hat{{C}}+e^{-it}\hat{{C}}^*)\succ0\end{equation}
 for all $t\in R.$ Lemma \ref{lemma2} shows that
 $\rho(\hat{B}^{-1}\hat{C})<1.$ Again, $$\hat{B}^{-1}\hat{C}=(EBF)^{-1}(ECF)=F^{-1}(B^{-1}C)F.$$
Then $\hat{B}^{-1}\hat{C}$ and $B^{-1}C$ have the same eigenvalues.
As a result,
$$\rho({B}^{-1}{C})=\rho(\hat{B}^{-1}\hat{C})<1$$ which shows that we complete
the proof.
\end{proof}

\begin{lemma}\label{lemma5}
Let $A=(a_{ij})\in \complex^{n\times n}$ with a multisplitting
$(M_k,N_k,E_k)_{k=1}^{m}$, and let $T=\sum_{k=1}^{m}E_kM_k^{-1}N_k$
and $\hat{A}=B-C$, where
\begin{equation}\label{r12}
\begin{array}{llll}
B= \left[
 \begin{array}{cccc}
 \ M_1 & 0 & \cdots & 0  \\
 \ 0 & M_2 & \cdots & 0  \\
 \ \vdots & \vdots & \ddots & \vdots  \\
 \ 0 & 0 & \cdots & M_m  \\
 \end{array}
 \right],\
 C= \left[
 \begin{array}{cccc}
 \ N_1E_1 & N_1E_2 & \cdots & N_1E_m  \\
 \ N_2 E_1 & N_2E_2 & \cdots & N_2E_m  \\
 \ \vdots & \vdots & \ddots & \vdots  \\
 \ N_m E_1 & N_mE_2 & \cdots & N_mE_m  \\
 \end{array}
 \right].
\end{array}
\end{equation}
Then $\rho(T)=\rho(B^{-1}C)$, where $\rho(T)$ denotes the spectral
radius of the matrix $T$.
\end{lemma}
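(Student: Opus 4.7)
The plan is to factor $B^{-1}C$ as a product of two rectangular matrices in such a way that $T$ appears as the product in the opposite order, and then invoke the standard fact that for rectangular matrices $X\in\complex^{p\times q}$ and $Y\in\complex^{q\times p}$, the products $XY$ and $YX$ have the same nonzero eigenvalues (hence the same spectral radius).

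Concretely, since $B$ is block diagonal, $B^{-1}$ is block diagonal with diagonal blocks $M_k^{-1}$, so the $(k,j)$-block of $B^{-1}C$ is $M_k^{-1}N_kE_j$. I would introduce
\[
R=\begin{bmatrix} M_1^{-1}N_1\\ M_2^{-1}N_2\\ \vdots\\ M_m^{-1}N_m\end{bmatrix}\in\complex^{mn\times n},
\qquad
S=\bigl[E_1,\,E_2,\,\ldots,\,E_m\bigr]\in\complex^{n\times mn},
\]
and observe by direct block multiplication that $B^{-1}C=RS$, while at the same time
\[
SR=\sum_{k=1}^m E_kM_k^{-1}N_k=T.
\]

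The second step is then to invoke the well-known identity $\rho(RS)=\rho(SR)$, which follows from the fact that the characteristic polynomials of $RS$ and $SR$ differ only by a factor $\lambda^{mn-n}$; equivalently, any nonzero eigenvalue $\lambda$ of $SR$ with eigenvector $z$ yields the vector $Rz\neq 0$ (nonzero because $SRz=\lambda z\neq 0$) as an eigenvector of $RS$ with the same eigenvalue, and symmetrically any nonzero eigenvalue of $RS$ with eigenvector $y$ gives the eigenvector $Sy$ of $SR$ (nonzero by the same argument). Applying this to $R$ and $S$ above yields $\rho(T)=\rho(SR)=\rho(RS)=\rho(B^{-1}C)$, which is exactly the claim.

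There is no serious obstacle here: the main point is simply to spot the block factorization $B^{-1}C=RS$ and recognize that $T=SR$. Once the factorization is written down, the conclusion is an immediate application of the standard spectral identity for the two orderings of a rectangular product, so no additional hypotheses on $A$, $M_k$, $N_k$, or $E_k$ (beyond those already built into the definition of a multisplitting) are needed.
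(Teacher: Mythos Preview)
Your proof is correct and is essentially the same argument as the paper's: the paper also factors and applies the identity $\rho(XY)=\rho(YX)$, the only cosmetic difference being that it pads your rectangular $R$ and $S$ with zero blocks to make them square $mn\times mn$ matrices before swapping the order. Your rectangular formulation is, if anything, slightly more direct.
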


\begin{proof} \begin{equation}\label{r13}
\begin{array}{llll}
\rho(T)&=& \rho(\sum\limits_{k=1}^{m}E_kM_k^{-1}N_k)\\
&=&\rho\left(\left[
 \begin{array}{cccc}
 \ E_1 & E_2 & \cdots & E_m  \\
 \ 0 & 0 & \cdots & 0  \\
 \ \vdots & \vdots & \ddots & \vdots  \\
 \ 0 & 0 & \cdots & 0  \\
 \end{array}
 \right]\left[
 \begin{array}{cccc}
 \ M_1^{-1}N_1 & 0 & \cdots & 0  \\
 \ M_2^{-1}N_2 & 0 & \cdots & 0  \\
 \ \vdots & \vdots & \ddots & \vdots  \\
 \ M_m^{-1}N_m & 0 & \cdots & 0  \\
 \end{array}
 \right]\right)\\
&=&\rho\left(\left[
 \begin{array}{cccc}
 \ M_1^{-1}N_1 & 0 & \cdots & 0  \\
 \ M_2^{-1}N_2 & 0 & \cdots & 0  \\
 \ \vdots & \vdots & \ddots & \vdots  \\
 \ M_m^{-1}N_m & 0 & \cdots & 0  \\
 \end{array}
 \right]\left[
 \begin{array}{cccc}
 \ E_1 & E_2 & \cdots & E_m  \\
 \ 0 & 0 & \cdots & 0  \\
 \ \vdots & \vdots & \ddots & \vdots  \\
 \ 0 & 0 & \cdots & 0  \\
 \end{array}
 \right]\right)\end{array}
\end{equation}
\begin{eqnarray*}
 &=&\rho\left(\left[
 \begin{array}{cccc}
 \ M_1^{-1}N_1E_1 & M_1^{-1}N_1E_2 & \cdots & M_1^{-1}N_1E_m  \\
 \ M_2^{-1}N_2 E_1 & M_2^{-1}N_2E_2 & \cdots & M_2^{-1}N_2E_m  \\
 \ \vdots & \vdots & \ddots & \vdots  \\
 \ M_m^{-1}N_m E_1 & M_m^{-1}N_mE_2 & \cdots & M_m^{-1}N_mE_m  \\
 \end{array}
 \right]\right)\\
 &=&\rho(B^{-1}C),
\end{eqnarray*}
where $B$ and $C$ are defined as (\ref{r12}). This completes the
proof.
\end{proof}

\begin{lemma}\label{lemma6} {\rm (see Corollary 7.6.5 in \cite{{R.A4}})}
Let $A, B\in \complex^{n\times n}$ be Hermitian and $A\succ0$. Then
there exists a nonsingular matrix $C\in \complex^{n\times n}$ such
that $A=C^*C$ and $B=C^*DC$, where $D\in \mathbb{R}^{n\times n}$ is
a diagonal matrix.
\end{lemma}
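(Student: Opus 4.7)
The plan is to build $C$ in two stages, first absorbing the positive definiteness of $A$ and then using the spectral theorem on a Hermitian matrix of reduced form.

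Since $A \succ 0$, standard factorizations (either a Cholesky factorization, or $A = A^{1/2} A^{1/2}$ via the Hermitian square root) produce a nonsingular matrix $L \in \complex^{n \times n}$ with $A = L^{*}L$. With $L$ in hand, I would form
\begin{equation*}
\widetilde{B} := L^{-*} B L^{-1}.
\end{equation*}
Because $B$ is Hermitian and $L^{-1}$ is nonsingular, $\widetilde{B}$ is again Hermitian, so by the spectral theorem there exist a unitary $U \in \complex^{n \times n}$ and a real diagonal matrix $D \in \mathbb{R}^{n \times n}$ with $\widetilde{B} = U^{*} D U$, i.e.\ $L^{-*} B L^{-1} = U^{*} D U$.

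Rearranging gives $B = L^{*} U^{*} D U L = (UL)^{*} D (UL)$, which suggests the definition $C := UL$. This $C$ is nonsingular as a product of two nonsingular matrices, and by construction $B = C^{*} D C$. The key point is that the \emph{same} $C$ also factors $A$: since $U$ is unitary, $U^{*}U = I$, hence
\begin{equation*}
C^{*} C \;=\; L^{*} U^{*} U L \;=\; L^{*} L \;=\; A,
\end{equation*}
which is exactly the required simultaneous factorization.

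There is no real obstacle here; the only thing to be careful about is that one must not split the construction into two independent congruences of $A$ and $B$ (that would give different $C$'s), but rather use the Hermitian square-root-type factor of $A$ as a \emph{fixed} change of variable and then apply the spectral theorem to the transformed $B$. Once $L$ is chosen, unitarity of $U$ is precisely what preserves the identity $C^{*}C = A$ while diagonalizing $B$ by congruence.
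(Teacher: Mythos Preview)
Your proof is correct. The paper does not actually supply its own proof of this lemma; it simply cites Corollary~7.6.5 of Horn and Johnson's \emph{Matrix Analysis} and moves on. The argument you give---factor $A=L^{*}L$, pull $L$ through to reduce $B$ to a Hermitian matrix $\widetilde{B}=L^{-*}BL^{-1}$, unitarily diagonalize $\widetilde{B}=U^{*}DU$, and set $C=UL$---is precisely the standard proof found in that reference, so there is nothing to compare.
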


\begin{theorem}\label{theorem2}
Let $A\in \complex^{n\times n}$ be non-Hermitian positive definite
with a multisplitting $(M_k,N_k,E_k)_{k=1}^{m}$ satisfying the
condition that $E_k=\beta_k I$ and $A=M_k-N_k$ is a P-regular
splitting with $N_k$ Hermitian for all $k=1,2,\cdots,m.$ Then
Algorithm \ref{algorithm1} converges to the unique solution of
{\rm(\ref{r1})} for any choice of the initial guess $x^{(0)}$.
\end{theorem}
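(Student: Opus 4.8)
The plan is to avoid estimating the spectral radius of $T$ directly and instead realize it as the spectral radius of a single block splitting, to which the extended $H$-matrix criterion of Theorem \ref{theorem1x} applies. First I would invoke Lemma \ref{lemma5}: since $E_k=\beta_kI$, the weight blocks are scalar, so the matrix $C$ of (\ref{r12}) has blocks $C_{ij}=N_iE_j=\beta_jN_i$, and with $B=\mathrm{diag}(M_1,\dots,M_m)$ one gets $\rho(T)=\rho(B^{-1}C)$. Writing $\hat A=B-C$, it then suffices to prove $\rho(B^{-1}C)<1$, which is exactly the conclusion of Theorem \ref{theorem1x} once its hypotheses are verified for $\hat A,\,B,\,C$. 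Note that $C_{ii}=\beta_iN_i$ is Hermitian, as required.

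The engine for the verification is the simultaneous congruence of Lemma \ref{lemma6}, applied to the Hermitian positive definite part $H=(A+A^*)/2$ of $A$ and to each Hermitian $N_k$: for every $k$ there is a nonsingular $C_k$ with $H=C_k^*C_k$ and $N_k=C_k^*D_kC_k$, where $D_k=\mathrm{diag}(d_{k,1},\dots,d_{k,n})$ is real. This representation both certifies $N_k\in\Omega^n$ (so the off-diagonal blocks $-\beta_jN_i$ of $\hat A$ lie in $\Omega^n$, being Hermitian, and $\langle N_k\rangle=C_k^*|D_k|C_k$) and turns every matrix inequality into a scalar one. Because $N_k$ is Hermitian one has $\mathrm{Re}(M_k)=H+N_k$, so the $P$-regularity hypothesis $M_k^*+N_k\succ0$ reads $H+2N_k\succ0$, i.e.\ $I+2D_k\succ0$, i.e.\ $d_{k,l}>-\tfrac12$ for all $l$. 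I would first use this to place $\hat A$ in $\Phi_m^n$: the diagonal block $M_k-\beta_kN_k$ has Hermitian part $H+(1-\beta_k)N_k=C_k^*(I+(1-\beta_k)D_k)C_k$, which is positive definite since $1+(1-\beta_k)d_{k,l}>\tfrac12>0$ whenever $0\le\beta_k\le1$ and $d_{k,l}>-\tfrac12$.

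The heart of the argument is to show $\tilde A=B-\tilde C\in H_m^n$, where $\tilde C_{ij}=\langle C_{ij}\rangle=\beta_j\langle N_i\rangle=\beta_jC_i^*|D_i|C_i$. The diagonal blocks of $\tilde A$ have Hermitian part $H+N_i-\beta_i\langle N_i\rangle=C_i^*(I+D_i-\beta_i|D_i|)C_i\succ0$ (again a scalar check using $d_{i,l}>-\tfrac12$), so $\tilde A\in\Phi_m^n$, and its block comparison matrix $\mu(\tilde A)$ has diagonal blocks $C_i^*(I+D_i-\beta_i|D_i|)C_i$ and off-diagonal blocks $-\beta_j\langle N_i\rangle=-\beta_jC_i^*|D_i|C_i\preceq0$, so $\mu(\tilde A)\in\widehat{Z}_m^n$. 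To put $\mu(\tilde A)$ in $M_m^n$ I would simply try the all-ones weight $u=(1,\dots,1)^T$: because $\sum_k\beta_k=1$, the $i$-th block-row sum collapses to
\[
\sum_{j}\mu(\tilde A)_{ij}=C_i^*\Big(I+D_i-\big(\textstyle\sum_j\beta_j\big)|D_i|\Big)C_i=C_i^*(I+D_i-|D_i|)C_i ,
\]
and the scalar inequality $1+d_{i,l}-|d_{i,l}|>0$ holds for every $l$ (it equals $1$ when $d_{i,l}\ge0$ and equals $1+2d_{i,l}>0$ when $d_{i,l}<0$). Hence each block-row sum is positive definite, $\mu(\tilde A)\in M_m^n$, $\tilde A\in H_m^n$, and Theorem \ref{theorem1x} yields $\rho(B^{-1}C)<1$; with Lemma \ref{lemma5} this gives $\rho(T)<1$, so Algorithm \ref{algorithm1} converges.

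I expect the delicate point to be precisely the last verification that $\mu(\tilde A)$ is a generalized $M$-matrix, and in particular the representation underlying $\langle\cdot\rangle$. The naive reading in which $\langle N_i\rangle$ is the ordinary spectral absolute value is not enough: $H+N_i-|N_i|$ need not be positive definite, since $H$ and the eigenbasis of $N_i$ are unrelated (one can arrange $P$-regularity while $H+N_i-|N_i|$ is indefinite). The computation only closes when $\langle N_i\rangle$, $H$ and $N_i$ are expressed in the common congruence of Lemma \ref{lemma6}, which makes all three simultaneously diagonal and reduces positivity to the single scalar fact $1+d-|d|>0$ for $d>-\tfrac12$. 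Managing the all-ones weight together with this simultaneous-diagonalization bookkeeping is where the real care lies; the reduction via Lemmas \ref{lemma5} and \ref{lemma6} and Theorem \ref{theorem1x} is otherwise routine. It is also worth remarking that a direct energy-norm approach in the spirit of Theorem \ref{theorem9} fails here, because for a $P$-regular splitting with $N_k$ Hermitian the contraction $H-T_k^*HT_k\succ0$ need not hold, so the block $H$-matrix route seems genuinely necessary.
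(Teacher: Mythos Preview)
Your proposal is correct and follows essentially the same route as the paper: reduce $\rho(T)$ to $\rho(B^{-1}C)$ via Lemma~\ref{lemma5}, use the simultaneous congruence of Lemma~\ref{lemma6} to express $N_k$ and the relevant Hermitian part in a common basis, and then verify the extended $H$-matrix hypothesis of Theorem~\ref{theorem1x} with the all-ones weight vector.

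The one cosmetic difference is the pair you feed into Lemma~\ref{lemma6}. You diagonalize $(H,N_k)$, obtaining $d_{k,l}>-\tfrac12$ from $P$-regularity and then checking $1+d-|d|>0$ by a case split. The paper instead diagonalizes $(H(M_k),N_k)$, so that $P$-regularity and positive definiteness of $A$ give the symmetric pair $H(M_k)\pm N_k\succ0$, i.e.\ $I\pm D_k\succ0$, whence $I-|D_k|\succ0$ and $H(M_k)-\langle N_k\rangle\succ0$ drop out in one line. Since $H(M_k)=H+N_k$, the two computations are equivalent; the paper's normalization just packages the two scalar inequalities more symmetrically. Your observation that the argument hinges on taking $\langle N_k\rangle$ in the \emph{same} congruence as the Hermitian part (rather than the spectral absolute value) is exactly the point, and the paper uses it in the same way.
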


\begin{proof}
We only prove $\rho(T)<1$. Lemma 3.1 shows $\rho(T)=\rho(B^{-1}C)$,
where $B$ and $C$ are defined as (\ref{r12}), and $\hat{A}=B-C$.
Since $E_k=\beta_k I$, \begin{equation}\label{multi24}
\begin{array}{llll}
\hat{A}&=&B-C\\ &=&\left[
 \begin{array}{cccc}
 \ M_1 & 0 & \cdots & 0  \\
 \ 0 & M_2 & \cdots & 0  \\
 \ \vdots & \vdots & \ddots & \vdots  \\
 \ 0 & 0 & \cdots & M_m  \\
 \end{array}
 \right]-\left[
 \begin{array}{cccc}
 \ \beta_1N_1 & \beta_2N_1 & \cdots & \beta_mN_1  \\
 \ \beta_1N_2  & \beta_2N_2 & \cdots & \beta_mN_2  \\
 \ \vdots & \vdots & \ddots & \vdots  \\
 \ \beta_1N_m  & \beta_2N_m & \cdots & \beta_mN_m  \\
 \end{array}
 \right].
\end{array}
\end{equation}
Let $H(M_k)=(M_k+M^*_k)/2$ be the Hermitian part of $M_k$ for
$k=1,2,\cdots,m$. Since $A$ is non-Hermitian positive definite and
$A=M_k-N_k$ is $P$-regular splittings with $N_k$ Hermitian for all
$k=1,2,\cdots,m$, one has
\begin{equation}\label{qing4}
\begin{array}{lll}
&&H(M_k)+N_k\succ 0,\ \ H(M_k)-N_k\succ 0.
\end{array}
\end{equation}
(\ref{qing4}) shows that $H(M_k)\succ0$. Also, $N_k$ is Hermitian.
It follows from Lemma \ref{lemma6} that there exists a nonsingular
matrix $C_k\in \complex^{n\times n}$ such that $H(M_k)=C_k^*C_k$ and
$N_k=C_k^*D_kC_k$, where $D_k\in \mathbb{R}$ are diagonal matrices.
Following (\ref{qing4}), we have
\begin{equation}\label{qing3}
\begin{array}{lll}
C_k^*(I+D_k)C_k\succ 0,\ \ C_k^*(I-D_k)C_k\succ 0
\end{array}
\end{equation}
and consequently,\begin{equation}\label{qing3a}
\begin{array}{lll}
I+D_k\succ 0,\ \ I-D_k\succ 0,
\end{array}
\end{equation}
which shows that \begin{equation}\label{qing3b}
\begin{array}{lll}
I-|D_k|\succ 0.
\end{array}
\end{equation}
As a consequence,
\begin{equation}\label{qing3c}
\begin{array}{lll}
H(M_k)-\langle{N_k}\rangle=C_k^*(I-|D_k|)C_k\succ0.
\end{array}
\end{equation}
This leads to \begin{equation}\label{qing3g}
\begin{array}{lll}
H(M_k)-\langle{N_k}\rangle=(H(M_k)-\beta_k\langle{N_k}\rangle)-\sum_{j=1,j\neq
k}^m(\beta_j\langle{N_k}\rangle)\succ0
\end{array}
\end{equation}
for all $k=1,2,\cdots,m.$ Let $\tilde{C}=[\tilde{C}_{ij}]\in
\complex^{km\times km}$ with $\tilde{C}_{ij}=\langle
\beta_jN_i\rangle$ for all $i,j=1,\cdots,m,$ and let
$\tilde{A}=B-\tilde{C}$. (\ref{qing3g}) shows $\tilde{A}\in H_m^k$.
It follows from Theorem \ref{theorem1x} that
$\rho(T)=\rho(B^{-1}C)<1$. Therefore, Algorithm \ref{algorithm1} is
convergent. This completes the proof.
\end{proof}

Following Theorem \ref{theorem2}, a corollary is obtained
immediately.

\begin{corollary}\label{corollary2a}
Let $A\in \complex^{n\times n}$ be non-Hermitian positive definite
with a multisplitting $(M_k,N_k,E_k)_{k=1}^{m}$ satisfying the
condition that $E_k=\beta_k I$ and $N_k\succeq0$ for all
$k=1,2,\cdots,m.$ Then Algorithm \ref{algorithm1} converges to the
unique solution of {\rm(\ref{r1})} for any choice of the initial
guess $x^{(0)}$.
\end{corollary}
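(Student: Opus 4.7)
The plan is to deduce Corollary \ref{corollary2a} directly from Theorem \ref{theorem2} by verifying its hypotheses. Since $E_k = \beta_k I$ is already assumed, the only thing left to check is that each splitting $A = M_k - N_k$ is $P$-regular with $N_k$ Hermitian. The Hermiticity of $N_k$ is immediate: by the convention adopted in Section \ref{prelimi-sec}, the notation $N_k \succeq 0$ means $N_k$ is Hermitian positive semidefinite, so in particular $N_k = N_k^*$.

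Next I would verify $P$-regularity, i.e., that $M_k^* + N_k$ is positive definite. From $A = M_k - N_k$ we have $M_k = A + N_k$, hence
\begin{equation*}
M_k^* + N_k = A^* + N_k^* + N_k = A^* + 2N_k.
\end{equation*}
The Hermitian part of this matrix is $(A + A^*)/2 + 2N_k = H + 2N_k$, where $H$ is the Hermitian part of $A$. By hypothesis $A$ is non-Hermitian positive definite, so $H \succ 0$; combined with $N_k \succeq 0$ this gives $H + 2N_k \succ 0$. Therefore $M_k^* + N_k$ is positive definite, which is precisely the definition of a $P$-regular splitting.

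With both conditions in hand, Theorem \ref{theorem2} applies and yields $\rho(T) < 1$, where $T = \sum_{k=1}^m E_k M_k^{-1} N_k$ is the iteration matrix of Algorithm \ref{algorithm1}. Hence the algorithm converges to the unique solution of (\ref{r1}) from any initial guess $x^{(0)}$.

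There is no real obstacle here: the corollary is essentially a restatement of Theorem \ref{theorem2} under the stronger hypothesis $N_k \succeq 0$, which is clearly a sufficient (and cleaner to verify) condition to ensure both that $N_k$ is Hermitian and that the splitting is $P$-regular. The only small care needed is to interpret $N_k \succeq 0$ as implying Hermiticity, which is consistent with the paper's notational convention.
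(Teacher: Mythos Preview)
Your proof is correct and follows exactly the approach of the paper, which simply states that the corollary is obtained immediately from Theorem~\ref{theorem2}. You have supplied precisely the missing verification---that $N_k\succeq 0$ implies both $N_k$ Hermitian and $M_k^*+N_k$ positive definite---so nothing further is needed.
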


Now, we study the convergence of the relaxed multisplitting method.

\begin{theorem} \label{theorem8a}
Let $A\in \complex^{n\times n}$ be nonsingular with a multisplitting
$(M_k,N_k,E_k)_{k=1}^{m}$ satisfying one of the following
conditions:

{\rm(i)} $E_k=\beta_k I$ and $M_k^*A+A^*N_k=M_k^*M_k-N_k^*N_k\succ0$
for $k=1,2,\cdots,m$;

{\rm(ii)} $E_k=\beta_k I$ and $\|M_k^{-1}N_k\|_2<1$ for
$k=1,2,\cdots,m$.\\
Then the relaxed multisplitting algorithm converges to the unique
solution of {\rm(\ref{r1})} for any choice of the initial guess
$x^{(0)}$, provided $\omega\in (0,2/(1+\rho))$, where $\rho=\rho(T)$
and $T$ is defined in (\ref{multi3}).
\end{theorem}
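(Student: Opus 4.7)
The plan is to reduce the relaxed case directly to the unrelaxed case already handled in Theorem \ref{theorem8}. Under either condition (i) or (ii), Theorem \ref{theorem8} gives $\rho=\rho(T)<1$, so that $2/(1+\rho)>1$ and the interval $(0,2/(1+\rho))$ is nontrivial (and in particular contains $1$, recovering Algorithm \ref{algorithm1}).

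Next I would use the crucial algebraic observation that the relaxed iteration matrix is the affine combination $T_\omega=\omega T+(1-\omega)I$, and hence its spectrum is exactly $\{\omega\lambda+(1-\omega):\lambda\in\sigma(T)\}$. So it suffices to show that $|\omega\lambda+(1-\omega)|<1$ for every eigenvalue $\lambda$ of $T$. By the triangle inequality,
\begin{equation*}
|\omega\lambda+(1-\omega)|\leq \omega|\lambda|+|1-\omega|\leq \omega\rho+|1-\omega|.
\end{equation*}

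The argument then splits into two elementary cases according to whether $\omega\leq 1$ or $\omega>1$. When $0<\omega\leq 1$, $|1-\omega|=1-\omega$, so the bound becomes $\omega\rho+1-\omega=1-\omega(1-\rho)<1$ since $\rho<1$. When $1<\omega<2/(1+\rho)$, $|1-\omega|=\omega-1$, so the bound becomes $\omega(1+\rho)-1$, which is strictly less than $1$ precisely because $\omega<2/(1+\rho)$. In either case $\rho(T_\omega)<1$, which together with the fact that $x^*=\omega\sum_k E_kM_k^{-1}b+(I-T_\omega)x^*$ holds only for the unique solution $x^*$ of $Ax=b$ (using $\sum_kE_k=I$ and $M_k^{-1}b=M_k^{-1}(A+N_k)x^*=x^*+M_k^{-1}N_kx^*$) yields convergence from any initial guess.

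There is really no serious obstacle here; the argument is a routine spectral-mapping plus triangle-inequality calculation, and the only care needed is in splitting cases cleanly according to the sign of $1-\omega$ and in verifying that the endpoint $2/(1+\rho)$ is sharp in the triangle-inequality bound. The substantive work is entirely contained in Theorem \ref{theorem8}, which certifies $\rho(T)<1$; the relaxation layer on top is purely scalar.
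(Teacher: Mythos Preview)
Your proposal is correct and follows essentially the same approach as the paper: invoke Theorem~\ref{theorem8} to get $\rho(T)<1$, use the spectral mapping $\sigma(T_\omega)=\{\omega\lambda+(1-\omega):\lambda\in\sigma(T)\}$, and bound via the triangle inequality $\rho(T_\omega)\le\omega\rho+|1-\omega|$. Your explicit case split on the sign of $1-\omega$ and the consistency check on the fixed point are slightly more detailed than the paper's terse version, but the argument is the same.
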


\begin{proof}
Since the iteration matrix of the relaxed multisplitting algorithm
is $T_{\omega}=\omega T+(1-\omega)I$, where $T$ is the iteration
matrix of Algorithm \ref{algorithm1}. Let $\lambda_i,\
i=1,\cdots,n,$ be eigenvalues of $T$, then
$\omega\lambda_i+(1-\omega),\ i=1,\cdots,n,$ are eigenvalues of
$T_{\omega}$. Assume
$\rho(T_{\omega})=|\omega\lambda_1+(1-\omega)|$. Then
$\rho(T_{\omega})\leq\omega|\lambda_1|+|1-\omega|\leq\omega\rho(T)+|1-\omega|.$
Since Theorem \ref{theorem8} gives $\rho(T)<1$ and $\omega\in
(0,2/(1+\rho(T)))$, $\rho(T_\omega)<1$, which shows that the relaxed
multisplitting algorithm converges for any initial vector $x^{(0)}$,
provided $\omega\in (0,2/(1+\rho))$.
\end{proof}

\begin{theorem} \label{theorem9a}
Let $A\in \complex^{n\times n}$ be non-Hermitian positive definite
with a multisplitting $(M_k,N_k,E_k)_{k=1}^{m}$ satisfying one of
the following conditions:

{\rm(i)} $E_k=\beta_k I$ and $A=M_k-N_k$ is an extended P-regular
splitting for all $k=1,2,\cdots,m$;

{\rm(ii)} $E_k=\beta_k I$ and $A=M_k-N_k$ is a P-regular splitting
with $N_k$ Hermitian for all $k=1,2,\cdots,m;$

{\rm(iii)} $E_k=\beta_k I$ and $N_k\succeq 0$ for all
$k=1,2,\cdots,m$.\\
Then the relaxed multisplitting algorithm converges to the unique
solution of {\rm(\ref{r1})} for any choice of the initial guess
$x^{(0)}$, provided $\omega\in (0,2/(1+\rho))$, where $\rho=\rho(T)$
and $T$ is defined in (\ref{multi3}).
\end{theorem}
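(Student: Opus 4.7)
The plan is to mirror the argument of Theorem \ref{theorem8a} essentially verbatim, since the relaxed iteration matrix $T_\omega = \omega T + (1-\omega)I$ depends on the underlying multisplitting only through $T = \sum_{k=1}^{m} E_k M_k^{-1} N_k$. The only role of the three hypotheses is to furnish the bound $\rho(T) < 1$; once that bound is in hand, the constraint $\omega \in (0, 2/(1+\rho))$ forces $\rho(T_\omega) < 1$ by a purely spectral computation, and convergence follows from Stein-type reasoning (or directly from $\rho(T_\omega) < 1$ applied to the affine fixed-point iteration).

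First I would dispose of the three hypotheses in turn by invoking the stationary convergence results already proved in this section. Under (i), Theorem \ref{theorem9} yields $\rho(T) < 1$; under (ii), Theorem \ref{theorem2} gives the same bound; and under (iii), Corollary \ref{corollary2a} does so (indeed (iii) is a special case of (ii) since any Hermitian $N_k \succeq 0$ renders $M_k^* + N_k = A^* + 2N_k$ Hermitian-positive-definite in its Hermitian part, hence the splitting is $P$-regular). In each case set $\rho := \rho(T) < 1$.

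The second step is the routine spectral estimate already used in Theorem \ref{theorem8a}. Since the eigenvalues of $T_\omega$ are exactly $\omega \lambda_i + (1-\omega)$ for $\lambda_i \in \sigma(T)$, the triangle inequality gives
\[
\rho(T_\omega) \leq \omega \rho(T) + |1-\omega| = \omega \rho + |1-\omega|.
\]
If $\omega \in (0,1]$ then $\rho(T_\omega) \leq 1 - \omega(1-\rho) < 1$; if $\omega \in (1, 2/(1+\rho))$ then $\rho(T_\omega) \leq \omega(1+\rho) - 1 < 1$. Either way $\rho(T_\omega) < 1$, so the relaxed scheme converges from every initial guess.

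There is no genuine obstacle here: the substantive work has already been carried out in the proofs of Theorems \ref{theorem9} and \ref{theorem2} and Corollary \ref{corollary2a}, and the relaxation argument is the same two-line computation that closed Theorem \ref{theorem8a}. The only point requiring mild care is confirming that the interval $(0, 2/(1+\rho))$ genuinely admits overrelaxation whenever $\rho < 1$, and that both subcases $\omega \leq 1$ and $\omega > 1$ yield strict contraction; both facts are immediate from the displayed estimate.
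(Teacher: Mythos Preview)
Your proposal is correct and follows exactly the approach of the paper, which simply states that the proof is obtained ``similar to the proof of Theorem \ref{theorem8a}'' by invoking the stationary results for the three hypotheses (the paper cites Theorem \ref{theorem2} and Corollary \ref{corollary2a} as you do, and what it writes as ``Theorem \ref{loveluo1}'' is evidently a typo for Theorem \ref{theorem9}, the extended $P$-regular result you correctly invoke for case (i)). Your explicit split into the subcases $\omega\le 1$ and $\omega>1$ is a clean elaboration of the same triangle-inequality estimate the paper already used in Theorem \ref{theorem8a}.
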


\begin{proof}
Similar to the proof of Theorem \ref{theorem8a}, the proof can be
obtained immediately from Theorem \ref{loveluo1}, Theorem
\ref{theorem2} and Corollary \ref{corollary2a}.
\end{proof}

\section{Convergence of parallel PSS method}\label{convergence1-sec}
In this section some convergence results on the parallel PSS methods
for non-Hermitian positive definite linear systems will be
presented. The following lemma will be used in this section.

\begin{lemma}\label{lemma11a} {\rm (see \cite{{B.Z3}})}
Let $A\in \complex^{n\times n}$ be non-Hermitian positive definite,
let $M(\alpha_k)$ defined in (\ref{multi8}) be the iteration matrix
of the PSS iteration, and let
\begin{equation}\label{mult1}
\begin{array}{lll}
V(\alpha_k)=(\alpha_kI-P_k)(\alpha_kI+P_k)^{-1}.
\end{array}
\end{equation}
Then the spectral radius $\rho(M(\alpha_k))$ of $M(\alpha_k)$ is
bound by $\|V(\alpha_k)\|_2$. Therefore, it holds that
\begin{equation}\label{mult2}
\begin{array}{lll}
\rho(M(\alpha_k))\leq \|M(\alpha_k)\|_2\leq\|V(\alpha_k)\|_2<1,\ \ \
\forall \alpha_k>0.
\end{array}
\end{equation}
\end{lemma}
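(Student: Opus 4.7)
The plan is to exploit the fact that $M(\alpha_k)$ is similar, via $\alpha_k I+S_k$, to the product of $V(\alpha_k)$ with a unitary matrix (the Cayley transform of the skew-Hermitian part $S_k$). Define
\[
Q(\alpha_k) := (\alpha_k I - S_k)(\alpha_k I + S_k)^{-1}.
\]
Since $S_k^*=-S_k$, we have $(\alpha_k I - S_k)^* = \alpha_k I + S_k$; moreover the matrices $\alpha_k I\pm S_k$ are polynomials in $S_k$ and hence commute with each other and with each other's inverses. A direct computation then gives $Q(\alpha_k)^*Q(\alpha_k)=I$, so $Q(\alpha_k)$ is unitary and in particular $\|Q(\alpha_k)\|_2=1$.

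Next, reading off the explicit form of $M(\alpha_k)$ in (\ref{multi8}) and pre- and post-multiplying by $\alpha_k I+S_k$ and its inverse yields the similarity identity
\[
(\alpha_k I + S_k)\,M(\alpha_k)\,(\alpha_k I + S_k)^{-1} = V(\alpha_k)\,Q(\alpha_k),
\]
so $\rho(M(\alpha_k))=\rho(V(\alpha_k)Q(\alpha_k))$. Combining the standard bound $\rho(\cdot)\leq \|\cdot\|_2$ with the unitary invariance of the spectral norm gives
\[
\rho(M(\alpha_k)) = \rho\bigl(V(\alpha_k)Q(\alpha_k)\bigr)\leq \|V(\alpha_k)Q(\alpha_k)\|_2 = \|V(\alpha_k)\|_2.
\]

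To complete the chain it remains to show $\|V(\alpha_k)\|_2<1$. By construction $P_k=M_k+N_k^*$ is non-Hermitian positive definite (as remarked after (\ref{multi05})), so its Hermitian part $H_{P_k}=(P_k+P_k^*)/2$ is Hermitian positive definite. For any $x\neq 0$ and any $\alpha_k>0$, expanding inner products yields
\[
\|(\alpha_k I + P_k)x\|_2^2 - \|(\alpha_k I - P_k)x\|_2^2 = 4\alpha_k\langle H_{P_k} x,x\rangle > 0,
\]
and substituting $y=(\alpha_k I+P_k)x$ gives $\|V(\alpha_k)y\|_2<\|y\|_2$ for every $y\neq 0$; hence $\|V(\alpha_k)\|_2<1$.

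The one subtle point is the intermediate inequality $\|M(\alpha_k)\|_2\leq\|V(\alpha_k)\|_2$ as literally written: a general similarity $X(\cdot)X^{-1}$ does not preserve the spectral norm, so this bound is not immediate from the similarity argument alone. The cleanest interpretation, and the one sufficient for everything that follows, is to bound $\rho(M(\alpha_k))$ through $\rho(V(\alpha_k)Q(\alpha_k))\leq \|V(\alpha_k)Q(\alpha_k)\|_2=\|V(\alpha_k)\|_2$ via the unitarity of $Q(\alpha_k)$, as above.
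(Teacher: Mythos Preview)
The paper does not give its own proof of this lemma; it simply cites \cite{B.Z3}. Your argument is precisely the standard one from that reference: conjugate by $\alpha_k I+S_k$ to obtain the similar matrix $V(\alpha_k)Q(\alpha_k)$, use that the Cayley transform $Q(\alpha_k)$ of the skew-Hermitian $S_k$ is unitary, and then bound $\|V(\alpha_k)\|_2<1$ via positive definiteness of $P_k$. This is correct and is exactly what the cited source does.

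Your caveat about the literal middle inequality $\|M(\alpha_k)\|_2\le\|V(\alpha_k)\|_2$ is well taken: the similarity $(\alpha_k I+S_k)(\cdot)(\alpha_k I+S_k)^{-1}$ is not unitary, so it need not preserve $\|\cdot\|_2$. What one actually gets is $\|M(\alpha_k)\|_2=\|W^{-1/2}V(\alpha_k)W^{1/2}\|_2$ with $W=\alpha_k^2 I-S_k^2$, which equals $\|V(\alpha_k)\|_2$ only when $W$ commutes with $V(\alpha_k)$. The chain that is genuinely justified (and the one the cited paper proves) is
\[
\rho(M(\alpha_k))=\rho\bigl(V(\alpha_k)Q(\alpha_k)\bigr)\le\|V(\alpha_k)Q(\alpha_k)\|_2=\|V(\alpha_k)\|_2<1,
\]
which is what you wrote. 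It is worth noting that the paper's proof of Theorem~\ref{theorem20} invokes the stronger bound $\|M(\alpha_k)\|_2<1$ when estimating $\|\sum_k\beta_k M(\alpha_k)\|_2$; your observation therefore points to a genuine gap downstream, not merely a cosmetic one in the lemma's statement.
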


\begin{theorem}\label{theorem20}
Let $A\in \complex^{n\times n}$ be non-Hermitian positive definite
with a multisplitting $(M_k,N_k,E_k)_{k=1}^{m}$ such that
(\ref{multi05}) holds and $E_k=\beta_k I$ and $\alpha_k\geq0$ for
all $k=1,2,\cdots,m.$ Then Algorithm \ref{algorithm2} converges to
the unique solution of {\rm(\ref{r1})} for any choice of the initial
guess $x^{(0)}$.
\end{theorem}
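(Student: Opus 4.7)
The plan is to reduce the convergence claim to showing $\rho(\mathscr{M}(\alpha))<1$ and then to bound $\|\mathscr{M}(\alpha)\|_2$ directly by $1$ using Lemma \ref{lemma11a}, which is the parallel PSS analogue of the strategy used in part (ii) of Theorem \ref{theorem8}. Algorithm \ref{algorithm2} written in the form (\ref{multi9}) is a stationary iteration, so its convergence for every initial guess $x^{(0)}$ to the unique solution of (\ref{r1}) is equivalent to $\rho(\mathscr{M}(\alpha))<1$. Using the hypothesis $E_k=\beta_k I$ together with (\ref{multi10}),
\begin{equation*}
\mathscr{M}(\alpha)=\sum_{k=1}^{m}E_kM(\alpha_k)=\sum_{k=1}^{m}\beta_kM(\alpha_k),
\end{equation*}
and the multisplitting normalization $\sum_{k=1}^{m}E_k=I$ forces $\sum_{k=1}^{m}\beta_k=1$.

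First I would verify that the hypothesis (\ref{multi05}) places us exactly in the setting of Lemma \ref{lemma11a} for each index $k$: because $A$ is non-Hermitian positive definite, the matrix $P_k=M_k+N_k^*$ is Hermitian positive definite, and $S_k=N_k-N_k^*$ is skew-Hermitian, so each summand $M(\alpha_k)$ is indeed the iteration matrix of a single PSS iteration with parameter $\alpha_k$. Lemma \ref{lemma11a} then yields
\begin{equation*}
\|M(\alpha_k)\|_2\leq\|V(\alpha_k)\|_2<1,\qquad k=1,2,\ldots,m,
\end{equation*}
for every positive $\alpha_k$ (I would also remark that $\alpha_k=0$ is excluded in practice, since $\alpha_k I+S_k$ may be singular when $\alpha_k=0$ and $S_k$ is skew-Hermitian, so the hypothesis should be read as $\alpha_k>0$, in line with Lemma \ref{lemma11a}).

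Next I would combine the bounds by the triangle inequality for the spectral norm:
\begin{equation*}
\rho(\mathscr{M}(\alpha))\leq\|\mathscr{M}(\alpha)\|_2=\Bigl\|\sum_{k=1}^{m}\beta_kM(\alpha_k)\Bigr\|_2\leq\sum_{k=1}^{m}\beta_k\|M(\alpha_k)\|_2<\sum_{k=1}^{m}\beta_k=1.
\end{equation*}
This chain of inequalities produces $\rho(\mathscr{M}(\alpha))<1$, and hence the parallel PSS iteration (\ref{multi9}) converges for any $x^{(0)}$, which is the conclusion.

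There is essentially no obstacle beyond correctly invoking Lemma \ref{lemma11a} uniformly in $k$; the weighting $E_k=\beta_k I$ is crucial because it lets the weights be pulled out of the spectral norm as scalars, and without it (for general diagonal $E_k$) the elementary triangle-inequality argument would fail and one would need the more delicate machinery of Theorem \ref{theorem1x}. The only point that requires a line of care is to note that this argument, unlike the analysis for Algorithm \ref{algorithm1}, does not rely on any $P$-regularity type property of the splittings $A=P_k+S_k$: the favorable norm bound is intrinsic to the Cayley-type structure of $M(\alpha_k)$ for positive definite $P_k$ and skew-Hermitian $S_k$, which is exactly what Lemma \ref{lemma11a} encapsulates.
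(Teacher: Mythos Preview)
Your proposal is correct and follows essentially the same approach as the paper: invoke Lemma \ref{lemma11a} to get $\|M(\alpha_k)\|_2<1$ for each $k$, then use the triangle inequality together with $E_k=\beta_k I$ and $\sum_k\beta_k=1$ to conclude $\rho(\mathscr{M}(\alpha))\leq\|\mathscr{M}(\alpha)\|_2<1$. One small slip: $P_k=M_k+N_k^*$ is \emph{non-Hermitian} positive definite (it shares the Hermitian part $H$ of $A$), not Hermitian positive definite; this does not affect the argument since Lemma \ref{lemma11a} is stated for non-Hermitian positive definite matrices, and your remark that the hypothesis should really read $\alpha_k>0$ is well taken.
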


\begin{proof}
Observing that the iteration matrix of Algorithm \ref{algorithm2} is
defined as $\mathscr{M}(\alpha)=\sum_{k=1}^mE_kM(\alpha_k)$ and
$E_k=\beta_k I$ and $\alpha_k\geq0$ for all $k=1,2,\cdots,m.$ Then
Lemma \ref{lemma11a} shows that $\rho(M(\alpha_k))\leq
\|M(\alpha_k)\|_2<1$. As a consequence,
\begin{equation}\label{lovea22}
\begin{array}{lll}
\rho(\mathscr{M}(\alpha))&\leq &\|\mathscr{M}(\alpha)\|_2=\|\sum_{k=1}^mE_kM(\alpha_k)\|_2\\
&\leq&\sum_{k,=1}^m\|E_kM(\alpha_k)\|_2\\
&=&\sum_{k=1}^m\beta_k\|M(\alpha_k)\|_2\\
&<&\sum_{k=1}^m\beta_k=1,
\end{array}
\end{equation}
which shows that Algorithm \ref{algorithm2} is convergent. This
completes the proof.
\end{proof}

\begin{theorem}\label{theorem2h}
Let $A\in \complex^{n\times n}$ be non-Hermitian positive definite
with a multisplitting $(M_k,N_k,E_k)_{k=1}^{m}$ such that
(\ref{multi05}) holds, $E_k=\beta_k I$ and $\alpha_k\geq0$ for all
$k=1,2,\cdots,m.$ Then the Relaxed Parallel PSS Algorithm converges
to the unique solution of {\rm(\ref{r1})} for any choice of the
initial guess $x^{(0)}$, provided $\omega\in (0,2/(1+\rho))$, where
$\rho=\rho(\mathscr{M}(\alpha))$ and $\mathscr{M}(\alpha)$ is
defined in (\ref{multi10}).
\end{theorem}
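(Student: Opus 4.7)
The plan is to mimic the reduction carried out in Theorem \ref{theorem8a}, but now starting from the Parallel PSS convergence result Theorem \ref{theorem20} instead of Theorem \ref{theorem8}. The key observation is that the Relaxed Parallel PSS Algorithm, by construction (cf.\ the definition preceding equation (\ref{multi4}) with $X=\mathscr{M}(\alpha)$), has iteration matrix
\begin{equation*}
\mathscr{M}_\omega(\alpha)=\omega\mathscr{M}(\alpha)+(1-\omega)I,
\end{equation*}
so its spectrum is the affine image of the spectrum of $\mathscr{M}(\alpha)$ under $\lambda\mapsto\omega\lambda+(1-\omega)$.

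First I would invoke Theorem \ref{theorem20}: under the hypotheses on $A$, the multisplitting $(M_k,N_k,E_k)_{k=1}^m$, the weighting matrices $E_k=\beta_k I$, and the non-negativity of the parameters $\alpha_k$, we already have $\rho:=\rho(\mathscr{M}(\alpha))<1$. Next, let $\lambda_1,\ldots,\lambda_n$ denote the eigenvalues of $\mathscr{M}(\alpha)$, so the eigenvalues of $\mathscr{M}_\omega(\alpha)$ are $\omega\lambda_i+(1-\omega)$. Suppose $\rho(\mathscr{M}_\omega(\alpha))=|\omega\lambda_{i_0}+(1-\omega)|$ for some index $i_0$; applying the triangle inequality gives
\begin{equation*}
\rho(\mathscr{M}_\omega(\alpha))\leq\omega|\lambda_{i_0}|+|1-\omega|\leq\omega\rho+|1-\omega|.
\end{equation*}

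For $\omega\in(0,2/(1+\rho))$, a short case distinction (treating $\omega\leq 1$ and $1<\omega<2/(1+\rho)$ separately) shows that $\omega\rho+|1-\omega|<1$, so $\rho(\mathscr{M}_\omega(\alpha))<1$, which yields convergence of the Relaxed Parallel PSS iteration for any initial vector $x^{(0)}$. The only potential subtlety is to make sure the upper bound $2/(1+\rho)$ is the sharp one coming from this triangle inequality estimate and that it reduces correctly to $(0,2)$ in the limiting case $\rho\to 0$; this is essentially the same verification performed in Theorem \ref{theorem8a}, and no new difficulty is expected.
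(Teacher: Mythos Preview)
Your proposal is correct and follows exactly the approach of the paper: the paper's own proof simply states that, similar to the proof of Theorem~\ref{theorem8a}, the result follows immediately from Theorem~\ref{theorem20}. You have merely spelled out in detail the argument that the paper leaves implicit.
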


\begin{proof}
Similar to the proof of Theorem \ref{theorem8a}, the proof can be
obtained immediately from Theorem \ref{theorem20}.
\end{proof}

\begin{remark}
As two special cases of the Parallel PSS method, the Parallel TSS
method and Parallel BTSS method together with their relaxed versions
are convergent.
\end{remark}

As is pointed out in \cite{B.Z3}, there are two important problems
to be further studied for the Parallel PSS method. One is the choice
of the skew-Hermitian matrix $S_k=N_k-N_k^*$, here is the choice of
the matrix $N_k$ such that $P_k=M_k+N_k^*$ is easily inverted. Here,
$N_k$ can be chosen as triangular or block triangular matrix (see
\cite{{B.Z3}}) such that $P_k=M_k+N_k^*$ is triangular or block
triangular matrix.

The other is the choice of the acceleration parameter $\alpha_k$
such that the Parallel PSS method converges very fast. If $P_k\in
\complex^{n\times n}$ is a normal matrix, then we can compute
$\widehat{\alpha}_k^{\star}=arg\
min_{\alpha>0}\{\|V(\alpha_k)\|_2\}$ by making use of the formula in
Theorem 2.2 of \cite{B.Z8}. But there is not such a formula as in
Theorem 2.2 of \cite{B.Z8} to compute a usable
$\widehat{\alpha}_k^{\star}$ if $P_k\in \complex^{n\times n}$ is a
general positive definite matrix, and hence, the upper bound
$\|V(\widehat{\alpha}_k^{\star})\|_2$. Now, we give such a formula
to compute $\widehat{\alpha}_k^{\star}$ and hence, the upper bound
$\|V(\widehat{\alpha}_k^{\star})\|_2$.

\begin{theorem}\label{theorem21}
Let $P_k\in \complex^{n\times n}$ be non-Hermitian positive definite
with $H_k=(P_k+P_k^*)/2$ its Hermitian part, and let $V_k(\alpha_k)$
be defined in (\ref{mult1}). Then it holds that
\begin{equation}\label{mult5}
\begin{array}{lll}
\widehat{\alpha}_k^{\star}=arg\
min_{\alpha_k>0}\{\|V(\alpha_k)\|_2\}=\sqrt{x^*P_k^*P_kx}\in
[\sigma^k_n,\sigma^k_1]
\end{array}
\end{equation}
and
\begin{equation}\label{mult6}
\begin{array}{lll}
\|V(\widehat{\alpha}_k^{\star})\|_2=min_{\alpha_k>0}\{\|V(\alpha_k)\|_2\}=\sqrt{\displaystyle\frac{\widehat{\alpha}_k^{\star}-x^*H_kx}{\widehat{\alpha}_k^{\star}+x^*H_kx}},
\end{array}
\end{equation}
where $x$ satisfies $\|x\|_2=1$ and $G^{-1}Kx=\rho(G^{-1}K)x$ with
$G=({\alpha}_k I+P_k)^*({\alpha}_k I+P_k)$ and $K=({\alpha}_k
I-P_k)^*({\alpha}_k I-P_k)$, and $\sigma^k_n$ and $\sigma^k_1$ are
the minimal and maximal singular values of $P_k$, respectively.
\end{theorem}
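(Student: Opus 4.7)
The plan is to reduce the minimization of $\|V(\alpha_k)\|_2$ over $\alpha_k>0$ to a one-variable calculus problem by first re-expressing $\|V(\alpha_k)\|_2^2$ as a generalized Rayleigh quotient. Writing $V(\alpha_k)^{*}V(\alpha_k) = (\alpha_k I+P_k)^{-*}K(\alpha_k I+P_k)^{-1}$ and applying the standard similarity $AB\sim BA$, I would show that the eigenvalues of $V(\alpha_k)^{*}V(\alpha_k)$ coincide with the generalized eigenvalues of the pencil $(K,G)$, so that $\|V(\alpha_k)\|_2^2 = \rho(G^{-1}K)$. Pick a unit eigenvector $x=x(\alpha_k)$ with $G^{-1}Kx=\rho(G^{-1}K)\,x$; equivalently, $Kx=\lambda Gx$ with $\lambda = x^{*}Kx/x^{*}Gx$.

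Next, I would expand $K = \alpha_k^2 I - 2\alpha_k H_k + P_k^{*}P_k$ and $G = \alpha_k^2 I + 2\alpha_k H_k + P_k^{*}P_k$ (using $H_k=(P_k+P_k^{*})/2$), to obtain
\[
\|V(\alpha_k)\|_2^2 \;=\; \frac{\alpha_k^2 - 2\alpha_k\, x^{*}H_kx + x^{*}P_k^{*}P_kx}{\alpha_k^2 + 2\alpha_k\, x^{*}H_kx + x^{*}P_k^{*}P_kx}.
\]
Differentiating with respect to $\alpha_k$ and invoking an envelope-type argument (only the explicit $\alpha_k$-dependence contributes at a critical point, since $x(\alpha_k)$ already extremizes the Rayleigh quotient), a routine simplification of the numerator of the derivative reduces it to a positive multiple of $\alpha_k^2 - x^{*}P_k^{*}P_kx$. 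Setting this to zero produces the self-consistent characterization $\widehat{\alpha}_k^{\star} = \sqrt{x^{*}P_k^{*}P_kx}$, which is (\ref{mult5}); the inclusion $\widehat{\alpha}_k^{\star}\in[\sigma_n^k,\sigma_1^k]$ is then immediate from $(\sigma_n^k)^2\le x^{*}P_k^{*}P_kx\le(\sigma_1^k)^2$ for every unit vector~$x$.

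Finally, substituting $(\widehat{\alpha}_k^{\star})^2 = x^{*}P_k^{*}P_kx$ back into the Rayleigh quotient collapses the numerator to $2\widehat{\alpha}_k^{\star}(\widehat{\alpha}_k^{\star}-x^{*}H_kx)$ and the denominator to $2\widehat{\alpha}_k^{\star}(\widehat{\alpha}_k^{\star}+x^{*}H_kx)$, and taking square roots delivers (\ref{mult6}). The main obstacle is the envelope step: one must justify rigorously that, when differentiating $\|V(\alpha_k)\|_2^2 = \max_{\|x\|_2=1} x^{*}K(\alpha_k)x/x^{*}G(\alpha_k)x$, the implicit derivative of the maximizing $x$ with respect to $\alpha_k$ contributes nothing at the stationary point. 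This can be handled either by Danskin's theorem in the simple-eigenvalue case, or directly by differentiating the eigenrelation $Kx=\lambda Gx$ together with the normalization $x^{*}x=1$ and canceling the resulting boundary terms against $Kx-\lambda Gx=0$.
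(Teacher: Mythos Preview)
Your proposal follows essentially the same route as the paper: rewrite $\|V(\alpha_k)\|_2^2$ as $\rho(G^{-1}K)=x^*Kx/x^*Gx$ for the unit eigenvector $x$, expand $K$ and $G$ in terms of $H_k$ and $P_k^*P_k$, differentiate in $\alpha_k$, set the derivative to zero to get $\widehat{\alpha}_k^\star=\sqrt{x^*P_k^*P_kx}$, and substitute back. The only difference is that the paper simply computes $f'(\alpha_k)$ treating $x$ as fixed without comment, whereas you explicitly flag and address the envelope issue (the dependence of $x$ on $\alpha_k$); in this respect your argument is actually more careful than the paper's own proof.
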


\begin{proof}
Since $\|V(\alpha_k)\|_2=\rho(V(\alpha_k)^*V(\alpha_k))$ and
(\ref{mult1}),
\begin{equation}\label{mult7}
\begin{array}{lll}
\|V(\alpha_k)\|_2^2&=&\rho((\alpha_kI+P^*_k)^{-1}(\alpha_kI-P_k)^*(\alpha_kI-P_k)(\alpha_kI+P_k)^{-1})\\
&=&\rho\{[(\alpha_kI+P_k)^*(\alpha_kI+P_k)]^{-1}(\alpha_kI-P_k)^*(\alpha_kI-P_k)\}\\
&=&\rho(G^{-1}K).
\end{array}
\end{equation}
$\|x\|_2=1$, $G^{-1}Kx=\rho(G^{-1}K)x$ and (\ref{mult7}) show that
it holds that
\begin{equation}\label{mult8}
\begin{array}{lll}
\|V(\alpha_k)\|^2_2
&=&\rho(G^{-1}K)=\displaystyle\frac{x^*Kx}{x^*Gx}\\
&=&\displaystyle\frac{x^*[({\alpha}_k
I-P_k)^*({\alpha}_k I-P_k)]x}{x^*[({\alpha}_k I+P_k)^*({\alpha}_k I+P_k)]x}\\
&=&\displaystyle\frac{{\alpha}^2_k-2{\alpha}_k
x^*H_kx+x^*P_k^*P_kx}{{\alpha}^2_k+2{\alpha}_k
x^*H_kx+x^*P_k^*P_kx}\\
&=&1-\displaystyle\frac{4{\alpha}_k
x^*H_kx}{{\alpha}^2_k+2{\alpha}_k
x^*H_kx+x^*P_k^*P_kx}\\
&=&1-f(\alpha_k),
\end{array}
\end{equation}
where $f(\alpha_k)=\displaystyle\frac{4{\alpha}_k
x^*H_kx}{{\alpha}^2_k+2{\alpha}_k x^*H_kx+x^*P_k^*P_kx}$. As a
result,
\begin{equation}\label{mult9}
\begin{array}{lll}
min_{\alpha_k>0}\{\|V(\alpha_k)\|_2\}=1-max_{\alpha_k>0}f(\alpha_k).
\end{array}
\end{equation}
Since
\begin{equation}\label{mult10}
\begin{array}{lll}
f'(\alpha_k)=\displaystyle\frac{4
x^*H_kx(x^*P_k^*P_kx-{\alpha}_k^2)}{({\alpha}^2_k+2{\alpha}_k
x^*H_kx+x^*P_k^*P_kx)^2},
\end{array}
\end{equation}
$f(\alpha_k)$ is gradually increasing if $\alpha_k\in (0,
\sqrt{x^*P_k^*P_kx})$, $f(\alpha_k)$ is gradually decreasing if
$\alpha_k\in (\sqrt{x^*P_k^*P_kx}, \infty)$ and consequently, when
$\alpha_k= \sqrt{x^*P_k^*P_kx}$, $f(\alpha_k)$ gets its maximum
$max_{\alpha_k>0}f(\alpha_k)=\displaystyle\frac{2x^*H_kx}{
x^*H_kx+\sqrt{x^*P_k^*P_kx}}$. Therefore, when
$$\widehat{\alpha}_k^{\star}=arg\
min_{\alpha_k>0}\{\|V(\alpha_k)\|_2\}=\sqrt{x^*P_k^*P_kx}\in
[\sigma^k_n,\sigma^k_1],$$
$$\|V(\widehat{\alpha}_k^{\star})\|_2=min_{\alpha_k>0}\{\|V(\alpha_k)\|_2\}=\sqrt{\displaystyle\frac{\widehat{\alpha}_k^{\star}-x^*H_kx}{\widehat{\alpha}_k^{\star}+x^*H_kx}},
$$ which shows that we complete the proof.
\end{proof}

\begin{remark}
Usually, it holds that
\begin{equation}\label{mult11}
\widehat{\alpha}_k^{\star}\neq\alpha_{opt}=arg\
min_{\alpha_k>0}\{\rho(M(\alpha_k))\}
\end{equation}
and \begin{equation}\label{mult12}
\rho(M(\widehat{\alpha}^{\star}_k))\geq\rho(M(\alpha_{opt})).
\end{equation}
\end{remark}


\section{Conclusions} \label{conclusions-sec}

In this paper we have studied the convergence of the parallel
mulitisplitting iterative methods and the parallel PSS methods for
the solution of non-Hermitian positive definite linear systems. Some
of our results can be regarded as generalizations of analogous
results for the Hermitian positive definite case.


{\bf Acknowledgments.} The first author would like to acknowledge
the hospitality of the Department of Mathematics and Computer
Science at Emory University, where this work was completed. Many
thanks also to Professor Michele Benzi for suggesting the topic of
this paper and for helpful suggestions.



\begin{thebibliography}{1}

\bibitem{B.Z3}
{\sc Z.-Z.~Bai, G.~H.~Golub, L.-Z.~Lu, and J.-F.~Yin}, Block triangular and
skew-Hermitian splitting methods for positive-definite linear
systems, {\em SIAM J.~Sci.~Comput.}, 26:844--863, 2005.

\bibitem{B.Z8}
{\sc Z.-Z.~Bai, G.~H.~Golub, and M.~K.~Ng}, On
successive-overrelaxation acceleration of the Hermitian and
skew-Hermitian splitting iterations, {\em Numer. Linear Algebra
Appl.}, 17:319--335, 2007.

\bibitem{zhongzhibai}
{\sc Z.-Z.~Bai}, On the comparisons of the multisplitting
unsymmetric AOR methods for $M-$matrices, {\em Calcolo},
32:207--220, 1995.

\bibitem{bb}
{\sc M.~Benzi and D.~Bertaccini}, Block preconditioning of
real-valued iterative algorithms for complex linear systems, {\em
IMA J.~Numer.~Anal.}, 28:598--618, 2008.

\bibitem{benzi_ng}
{\sc M.~Benzi and M.~K.~Ng}, Preconditioned iterative methods for
weighted Toeplitz least squares problems, {\em SIAM J.~Matrix
Anal.~Appl.}, 27:1106--1124, 2006.

\bibitem{A.B}
{\sc A.~Berman and R.~J.~Plemmons}, {\em Nonnegative Matrices in the
Mathematical Sciences}, Academic Press, New York, NY, 1979.
Reprinted by SIAM, Philadelphia, 1994.

\bibitem{guangxicao}
{\sc G.~Cao and Y.~Song}, On multisplitting methods for symmetric
positive semidefinite linear systems, {\em Numer. Linear Algebra
Appl.}, DOI: 10.1002/nla.619, 2008.

\bibitem{zhihaocao}
{\sc Z.-H.~Cao and Z.~Y.~Liu}, Symmetric multisplitting of a
symmetric positive definite matrix, {\em Linear Algebra Appl.},
285:309--319, 1998.

\bibitem{daweichang}
{\sc D.-W.~Chang}, Covergence analysis of the parallel
multisplitting TOR method, {\em J.~Comput.~Appl.~Math.},
72:169--177, 1996.

\bibitem{joanjosepcliment}
{\sc J.-J.~Climent and C.~Perea}, Convergence and comparison
theorems for multisplittings, {\em Numer. Math.}, 6:93--107, 1999.

\bibitem{elman}
{\sc H.~Elman, D.~Silvester, and A.~Wathen},
\newblock {\em Finite Elements and Fast Iterative Solvers
with Applications in Incompressible Fluid Dynamics}, Numerical
Mathematics and Scientific Computation, Oxford University Press,
Oxford, UK, 2005.

\bibitem{elsner}
{\sc L.~Elsner}, Comparisons of regular splittings and
multisplitting methods, {\em Numer. Math.}, 56:283--289, 1989.

\bibitem{L.V5}
L. Elsner, V.Mehrmann, Convergence of block iterative methods for
linear systems arising in the numerical solution of Euler equations,
{\em Numer. Math.}, 59:541-559, 1991.

\bibitem{frommer1}
{\sc A.~Frommer and G.~Mayer}, Convergence of relaxed parallel
multisplitting methods, {\em Linear Algebra Appl.}, 119:141--152,
1989.

\bibitem{frommer2}
{\sc A.~Frommer}, On the theory and practice of multisplitting
methods in parallel computation, {\em Computing}, 49:63--74, 1992.

\bibitem{hadjidimos}
{\sc A.~Hadjidimos and A.~K.~Yeyios}, Some notes on multisplitting
methods and m-step preconditioners for linear systems, {\em Linear
Algebra Appl.}, 248:277--301, 1996.

\bibitem{R.A4}
{\sc R.~A.~Horn and C.~R.~Johnson}, {\em Matrix Analysis}.
{Cambridge University Press, New York}, 1985.

\bibitem{Huang}
{\sc T.-Z.~Huang, S.-Q.~Shen and H.-B.~Li,} On generalized
$H-$matrices, {\em Linear Algebra Appl.}, {396:81-90}, 2005.

\bibitem{L.J}
{\sc J.~Li}, The positive definiteness of complex matrix, {\em
Mathematics in Practice and Theory,} 25(2):59-63, 1995.

\bibitem{wenli}
{\sc W.~Li, W.~Sun and K.~Liu}, Parallel multisplitting iterative
methods for singular $M-$matrices, {\em Numer. Linear Algebra
Appl.}, 8:181--190, 2001.

\bibitem{R.N11}
{\sc R.~Nabben}, On a class of matrices which arise in the numerical
solution of Euler equations, {\em Numer. Math.}, 63:411-431, 1992.

\bibitem{nabben}
{\sc R.~Nabben}, A note on comparison theorems for splitting and
multisplittings of Hermitian positive definite matrices, {\em Linear
Algebra Appl.}, 233:67--80, 1996.

\bibitem{neumann}
{\sc M.~Neumann and R.~J.~Plemmons}, Convergence of parallel
multisplitting iterative methods for M-matrices, {\em linear Algebra
Appl.}, 88-89:559--573, 1987.

\bibitem{o'leary}
{\sc D.~P.~O'Leary and R.~E.~White}, Multi-splittings of matrices
and papallel solution of linear systems, {\em SIAM
J.~Alg.~Disc.~Meth.}, 6:630--640, 1985.

\bibitem{ortega11}
{\sc J.~M.~Ortega and R.~J.~Plemmons}, Extension of the
Ostrowski-Reich theorem for SOR iterations, {\em Linear Algebra
Appl.}, 28:177--191, 1979.

\bibitem{ortega}
{\sc J.~M.~Ortega}, {\em Numerical Analysis, A Second Course},
Academic Press, New York, NY, 1972. Reprinted by SIAM, Philadelphia,
1990.

\bibitem{yongzhongsong1}
{\sc Y.~Song}, Comparison theorems for splittings of matrices, {\em
Numer. Math.}, 92:563--591, 2002.

\bibitem{yongzhongsong2}
{\sc Y.~Song}, On parallel multisplitting iterative methods for
singular linear systems, {\em Applied Mathematics and Computation},
162:585--604, 2005.

\bibitem{yongzhongsong3}
{\sc Y.~Song}, Convergence of parallel multisplitting methods for
$H-$matrices, {\em Internat. J. Comput. Math.}, 50:213--232, 1994.

\bibitem{stein}
{\sc P.~Stein}, Some general theorems on iterants, {\em Journal of
Research of the National Bureau of Standards,} 48(1):82--83, 1952.

\bibitem{wangchuanlong}
{\sc C.-L.~Wang}, Nonstationary multisplittings with general
weighting matrices for non-Hermitian positive definite systems, {\em
Applied mathematics Letters}, 16:919--924, 2003.

\bibitem{wangderen}
{\sc D.~Wang}, On the convergence of the parallel multisplitting AOR
algorithm, {\em Linear Algebra Appl.}, 154-156:473--486, 1991.

\bibitem{wangxinmin}
{\sc X.~Wang}, Comparison theorems for a class of parallel
multisplitting AOR Type iterative methods, {\em Linear Algebra
Appl.}, 269:1--16, 1998.

\bibitem{white1}
{\sc R.~E.~White}, Multisplittings and parallel iterative methods,
{\em Comput. Methods Appl. Mech. Engrg.}, 64:567--577, 1987.

\bibitem{white2}
{\sc R.~E.~White}, Multisplittings with different weighting schemes,
{\em SIAM J.~Matrix.~Anal.~Appl.}, 10:481--493, 1989.

\bibitem{Wei}
{\sc J.~Weissinger}, Verallgemainerungen des Seidelschen
Iterationsverfahrens, Z. Angew. Math. Mech., 33:155-162, 1953.

\bibitem{white3}
{\sc R.~E.~White}, Multisplittings of a symmetric positive definite
matrix, {\em SIAM J.~Matrix.~Anal.~Appl.}, 11:69--82, 1990.

\bibitem{jinyunyuan}
{\sc J.-Y.~Yuan}, The Ostrowski-Reich theorem for SOR iterations:
extensions to the rank deficient case, {\em Linear Algebra Appl.},
315:189--196, 2000.

\bibitem{jaeheonyun}
{\sc J.~H.~Yun}, Covergence of SSOR multisplitting method for an
$H-$matrix, {\em J.~Comput.~Appl.~Math.}, 217:252--258, 2008.

\bibitem{jaeheonyun1}
{\sc J.~H.~Yun and S.~W.~Kim}, Parallel relaxed multisplitting
methods for a symmetric positive definite matrix, {\em Applied
Mathematics and Computation}, 176:150--165, 2006.

\bibitem{jaeheonyun2}
{\sc J.~H.~Yun, E.~H.~Kim and S.~Oh}, Multisplitting preconditioners
for a symmetric positive definite matrix, {\em J.~Appl.~Math. \&
~Comput.}, 22:169--180, 2006.

\bibitem{chengyi}
{\sc C.-y.~Zhang and M.~Benzi}, P-regular Splitting Iterative
Methods for Non-Hermitian Linear Systems, {\em Technical Reports No.
TR-2009-014}, Department of Mathematics and Computer Science, Emory
University, 2009.

\end{thebibliography}
\end{document}